\title[]{Approximate central limit theorems}
\author{Ben Berckmoes and Geert Molenberghs}
\thanks{Ben Berckmoes is post doctoral fellow at the Fund for Scientific Research of Flanders (FWO)}
\thanks{Geert Molenberghs gratefully acknowledges financial support from the IAP research network  \#P7/06 of the Belgian Government (Belgian Science Policy)}
\date{}
\DeclareMathOperator*{\myinf}{in\vphantom{p}f}
\begin{document}

\maketitle

\newtheorem{pro}{Proposition}[section]
\newtheorem{lem}[pro]{Lemma}
\newtheorem{thm}[pro]{Theorem}
\newtheorem{de}[pro]{Definition}
\newtheorem{co}[pro]{Comment}
\newtheorem{no}[pro]{Notation}
\newtheorem{vb}[pro]{Example}
\newtheorem{vbn}[pro]{Examples}
\newtheorem{gev}[pro]{Corollary}
\newtheorem{vrg}[pro]{Question}
\newtheorem{rem}[pro]{Remark}
\newtheorem{lemA}{Lemma}

\begin{abstract}
We refine the classical Lindeberg-Feller central limit theorem by obtaining asymptotic bounds on the Kolmogorov distance, the Wasserstein distance, and the parametrized Prokhorov  distances in terms of a Lindeberg index. We thus obtain more general approximate central limit theorems, which roughly state that the row-wise sums of a triangular array are approximately asymptotically normal if the array approximately satisfies Lindeberg's condition. This allows us to continue to provide information in non-standard settings in which the classical central limit theorem fails to hold. Stein's method plays a key role in the development of this theory.
\end{abstract}

\section{Introduction}\label{sec:Intro}
Throughout, we assume that all random variables are defined on a fixed probability space $(\Omega,\mathcal{F},\mathbb{P})$.

Let $\xi$ be a standard normal random variable, that is, a normally distributed random variable with $\mathbb{E}[\xi] = 0$ and $\mathbb{E}[\xi^2] = 1$, and $\{\xi_{n,k}\}$ a standard triangular array (STA) of random variables, that is, a triangular array
\begin{equation*}
\begin{array}{cccc} 
\xi_{1,1} &  &  \\
\xi_{2,1} & \xi_{2,2} & \\
\xi_{3,1} & \xi_{3,2} & \xi_{3,3} \\
 & \vdots &
\end{array}
 \end{equation*}
of random variables with $\xi_{n,1}, \ldots, \xi_{n,n}$ independent for all $n$, $\mathbb{E}[\xi_{n,k}] = 0$ for all $n,k$, and $\sum_{k=1}^n \mathbb{E}[\xi_{n,k}^2] = 1$ for all $n$.
 
 Recall that the sequence $\left(\sum_{k=1}^n \xi_{n,k}\right)_n$ is said to converge weakly to $\xi$ iff
 \begin{equation*}
 \lim_{n \rightarrow \infty} \mathbb{P}\left[\sum_{k=1}^n \xi_{n,k} \leq x_0\right] = \mathbb{P}[\xi \leq x_0]
 \end{equation*}
 for all $x_0$ at which the map $x \mapsto \mathbb{P}[\xi \leq x]$ is continuous, or, equivalently, iff
 \begin{equation*}
 \lim_{n \rightarrow \infty} \mathbb{E}\left[h\left(\sum_{k=1}^n \xi_{n,k}\right)\right] = \mathbb{E}[h(\xi)]
 \end{equation*}
 for all $h : \mathbb{R} \rightarrow \mathbb{R}$ bounded and continuous.
 
 We say that $\{\xi_{n,k}\}$ satisfies Feller's condition iff
 \begin{equation}
 \lim_{n \rightarrow \infty} \max_{k=1}^n \mathbb{E}\left[\xi_{n,k}^2\right] = 0,\label{eq:FelCon}
 \end{equation}
 and Lindeberg's condition iff 
 \begin{equation*}
\lim_{n \rightarrow \infty} \sum_{k=1}^n \mathbb{E}\left[\xi_{n,k}^2 ; \left|\xi_{n,k}\right| > \epsilon\right] = 0
 \end{equation*}
 for all $\epsilon > 0$. It is easily seen that Lindeberg's condition implies Feller's, but that the converse does not hold.
 
The above language allows us to formulate the following result, which belongs to the heart of classical probability theory.
 
 \begin{thm}[Lindeberg-Feller Central Limit Theorem]\label{thm:LFCLT}
 Let $\xi$ and $\{\xi_{n,k}\}$ be as above. If $\{\xi_{n,k}\}$ satisfies Lindeberg's condition, then the sequence $(\sum_{k=1}^n \xi_{n,k})_n$ converges weakly to $\xi$. The converse holds if $\{\xi_{n,k}\}$ satisfies Feller's condition.
 \end{thm}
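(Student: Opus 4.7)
My plan is to prove the theorem via characteristic functions and Lévy's continuity theorem: by the latter, weak convergence of $S_n = \sum_{k=1}^n \xi_{n,k}$ to $\xi$ is equivalent to the pointwise convergence $\phi_{S_n}(t) \to e^{-t^2/2}$. Throughout I write $\sigma_{n,k}^2 = \mathbb{E}[\xi_{n,k}^2]$ and use $\phi_{S_n}(t) = \prod_{k=1}^n \phi_{\xi_{n,k}}(t)$, which follows from independence.

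For the forward implication, I note first that Lindeberg's condition entails Feller's, so $\max_k \sigma_{n,k}^2 \to 0$. A second-order Taylor expansion of $e^{itx}$ together with the zero-mean assumption yields
\begin{equation*}
\left|\phi_{\xi_{n,k}}(t) - 1 + \tfrac{t^2}{2}\sigma_{n,k}^2\right| \leq \mathbb{E}\left[\min\left(t^2 \xi_{n,k}^2, \tfrac{|t|^3}{6} |\xi_{n,k}|^3\right)\right].
\end{equation*}
Splitting the expectation at $\{|\xi_{n,k}|\leq\epsilon\}$ versus its complement and summing over $k$ controls the total by a quantity of the form $\epsilon |t|^3/6 + t^2 \sum_k \mathbb{E}[\xi_{n,k}^2 ; |\xi_{n,k}|>\epsilon]$, which vanishes upon sending $n\to\infty$ and then $\epsilon\to 0$. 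Combining this with the telescoping inequality $|\prod_k z_k - \prod_k w_k| \leq \sum_k |z_k - w_k|$ valid for complex numbers of modulus at most $1$ and with the bound $|e^{-s} - 1 + s| \leq s^2$ applied to $s = t^2\sigma_{n,k}^2/2$, I obtain $\phi_{S_n}(t) \to \prod_k e^{-t^2\sigma_{n,k}^2/2} = e^{-t^2/2}$.

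For the converse, assume Feller's condition and $\phi_{S_n}(t)\to e^{-t^2/2}$. The first-order estimate $|\phi_{\xi_{n,k}}(t)-1| \leq t^2 \sigma_{n,k}^2/2$ combined with Feller gives $\max_k |\phi_{\xi_{n,k}}(t)-1| \to 0$, which legitimizes the linearization $\sum_k(\phi_{\xi_{n,k}}(t)-1) \to -t^2/2$. Taking real parts and using $\sum_k\sigma_{n,k}^2 = 1$ yields
\begin{equation*}
\sum_k \mathbb{E}\left[\tfrac{t^2}{2}\xi_{n,k}^2 - (1-\cos(t\xi_{n,k}))\right] \to 0.
\end{equation*}
The integrand is non-negative, so its restriction to $\{|\xi_{n,k}|>\epsilon\}$ also tends to zero; on that event the bound $t^2 y^2/2 - (1-\cos(ty)) \geq t^2 y^2/4$ holds whenever $t|y|\geq 2\sqrt{2}$, so choosing $t\geq 2\sqrt{2}/\epsilon$ forces $\sum_k \mathbb{E}[\xi_{n,k}^2; |\xi_{n,k}|>\epsilon] \to 0$, and Lindeberg's condition follows by arbitrariness of $\epsilon$.

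The main obstacle is the converse. The forward direction is essentially bookkeeping once the Taylor estimate for individual characteristic functions is in hand. The converse, by contrast, requires one to convert a complex-valued asymptotic identity into a one-sided real tail bound, and this hinges on two delicate steps: the justification of linearizing the long product, where Feller's condition is the crucial ingredient, and the joint calibration of $t$ and $\epsilon$ so that the cosine error on small values does not swamp the tail contribution that one is trying to extract.
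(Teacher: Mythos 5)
Your proof is correct, but note that the paper never proves Theorem \ref{thm:LFCLT} at all: it is quoted as a classical result, and the paper's actual contribution is a family of quantitative refinements of its \emph{sufficiency} half via Stein's method (Theorem \ref{thm:BigIneq} and Appendix A, leading to Theorems \ref{thm:ACLTK}, \ref{thm:ACLTW}, \ref{thm:ACLTP}). So the comparison is really between two methods of proving sufficiency, plus your handling of the converse, which the paper's machinery does not touch. Your forward direction is the classical L\'evy--Lindeberg characteristic-function argument: Taylor-expand each $\phi_{\xi_{n,k}}$, split at $\epsilon$, and telescope the product against $\prod_k e^{-t^2\sigma_{n,k}^2/2}$. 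The paper's route instead bounds $|\mathbb{E}[h(\xi)]-\mathbb{E}[h(S_n)]|$ for smooth test functions $h$ through the Stein transform $f_h$ and a leave-one-out decomposition (Lemma 2 of the appendix); that route costs more machinery but yields explicit non-asymptotic bounds and hence the ``approximate'' CLTs that are the point of the paper, whereas the characteristic-function route is shorter but purely qualitative. For the converse, your argument (linearize $\log\phi_{S_n}$ using Feller to control $\max_k|\phi_{\xi_{n,k}}(t)-1|$, take real parts, and calibrate $t\geq 2\sqrt{2}/\epsilon$ so that $t^2y^2/2-(1-\cos(ty))\geq t^2y^2/4$ on $\{|y|>\epsilon\}$) is the standard and correct one; the only point worth tightening is that from $\exp\bigl(\sum_k(\phi_{\xi_{n,k}}(t)-1)\bigr)\to e^{-t^2/2}$ you should extract only the convergence of the real part (by taking moduli), which is exactly what your subsequent cosine estimate uses, rather than asserting convergence of the full complex sum, whose imaginary part requires a separate (and here unnecessary) argument.
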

 
 The number
\begin{equation}
\textrm{\upshape{Lin}}\left(\{\xi_{n,k}\}\right) = \sup_{\epsilon > 0} \limsup_{n \rightarrow \infty} \sum_{k=1}^n \mathbb{E}\left[\xi_{n,k}^2 ; \left|\xi_{n,k}\right| > \epsilon\right]\label{eq:Lin}
\end{equation}
was introduced in \cite{BLV13} as the Lindeberg index. Notice that it produces for each STA a number between 0 and 1, and that it is 0 if and only if Lindeberg's condition is satisfied. So it can be thought of as a number which measures how far a given STA deviates from satisfying Lindeberg's condition.

Furthermore, let $d(\eta,\eta^\prime)$ be a metric on random variables with the property that $ \lim_{n \rightarrow \infty} d(\eta,\eta_n) = 0$ is equivalent with weak convergence of $(\eta_n)_n$ to $\eta$, and define the quantity
\begin{equation}
\lambda_{d}\left(\sum_{k=1}^n \xi_{n,k} \rightarrow \xi\right) = \limsup_{n \rightarrow \infty} d\left(\xi,\sum_{k=1}^n \xi_{n,k}\right).\label{eq:LimOpd}
\end{equation}
Clearly, (\ref{eq:LimOpd}) assigns a positive number to each STA which is 0 if and only if the row-wise sums of the STA are asymptotically normal. Thus this number measures how far a given STA deviates from having an asymptotically normal sequence of row-wise sums.

Now, using the numbers (\ref{eq:Lin}) and (\ref{eq:LimOpd}), the first part of Theorem \ref{thm:LFCLT} leads to the implication
\begin{equation*}
\textrm{\upshape{Lin}}(\{\xi_{n,k}\}) = 0 \Rightarrow \lambda_{d}\left(\sum_{k=1}^n \xi_{n,k} \rightarrow \xi\right) = 0.
\end{equation*}
Observe that Theorem \ref{thm:LFCLT} fails to provide any information for the large class of STA's which fail to satisfy Lindeberg's condition, regardless of whether $\textrm{\upshape{Lin}}(\{\xi_{n,k}\})$ is large or small. Thus the following natural question arises.

\begin{vrg}\label{vrg:vrg}
Suppose that we are given an STA $\{\xi_{n,k}\}$ which is close to satisfying Lindeberg's condition in the sense that $\textrm{\upshape{Lin}}(\{\xi_{n,k}\})$ is non-zero but small. Is it still possible to conclude that the row-wise sums of $\{\xi_{n,k}\}$ are close to being asymptotically normal in the sense that $\lambda_{d}\left(\sum_{k=1}^n \xi_{n,k} \rightarrow \xi\right) $ is small?
\end{vrg}

Let us briefly describe how in the case where $d$ is the Kolmogorov metric
 \begin{equation*}
 K\left(\eta,\eta^\prime\right) = \sup_{x \in \mathbb{R}} \left|\mathbb{P}[\eta \leq x] - \mathbb{P}\left[\eta^\prime \leq x\right]\right|,
 \end{equation*}
 a positive answer to Question \ref{vrg:vrg} can be derived from the existing literature.

The following refinement of the sufficiency of Lindeberg's condition in Theorem \ref{thm:LFCLT} was obtained in terms of the Kolmogorov distance in \cite{O66} and \cite{F68}.

\begin{thm}\label{thm:OsiFeller}
Let $\xi$ be as above. Then there exists a universal constant $C > 0$ such that 
\begin{equation*}
K\left(\xi,\sum_{k=1}^n \xi_{n,k}\right) \leq C\left( \sum_{k=1}^n \mathbb{E}\left[\xi_{n,k}^2 ; \left|\xi_{n,k}\right| > 1\right]  + \sum_{k=1}^n\mathbb{E}\left[\left|\xi_{n,k}\right|^3 ; \left|\xi_{n,k}\right| \leq 1\right]\right)
\end{equation*}
for all STA's $\{\xi_{n,k}\}$ and all $n$.
\end{thm}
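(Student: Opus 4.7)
The plan is to apply Stein's method. For each $x \in \mathbb{R}$, let $f_x$ denote the bounded solution of the Stein equation
$$f_x'(w) - w f_x(w) = \mathbf{1}_{(-\infty,x]}(w) - \mathbb{P}[\xi \leq x].$$
Classical calculations yield the uniform bounds $\|f_x\|_\infty \leq \sqrt{2\pi}/4$ and $\|f_x'\|_\infty \leq 1$, together with the regularity statement that $f_x'$ is Lipschitz on each of $(-\infty, x)$ and $(x, \infty)$ with a single jump of size exactly $1$ at $w = x$. Writing $W_n = \sum_{k=1}^n \xi_{n,k}$, this reduces the problem to bounding
$$K(\xi, W_n) = \sup_{x \in \mathbb{R}} \big|\mathbb{E}[f_x'(W_n) - W_n f_x(W_n)]\big|$$
uniformly in $x$.

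Next, set $W_{n,k} = W_n - \xi_{n,k}$. Since $\xi_{n,k}$ is independent of $W_{n,k}$ and $\mathbb{E}[\xi_{n,k}] = 0$, I would replace $f_x(W_n)$ by $f_x(W_n) - f_x(W_{n,k})$ inside $\mathbb{E}[\xi_{n,k} f_x(W_n)]$, rewrite the difference as an integral of $f_x'$, and apply a Fubini-type argument to obtain
$$\mathbb{E}[\xi_{n,k} f_x(W_n)] = \int_{\mathbb{R}} \mathbb{E}[f_x'(W_{n,k} + t)]\, K_{n,k}(t)\, dt,$$
where the nonnegative kernel $K_{n,k}(t) = \mathbb{E}\big[\xi_{n,k}(\mathbf{1}_{0 \leq t \leq \xi_{n,k}} - \mathbf{1}_{\xi_{n,k} \leq t < 0})\big]$ satisfies $\int K_{n,k}(t)\,dt = \mathbb{E}[\xi_{n,k}^2]$. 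Summing over $k$ and using $\sum_k \mathbb{E}[\xi_{n,k}^2] = 1$ to insert a matching representation of $\mathbb{E}[f_x'(W_n)]$ then yields the standard Stein identity
$$\mathbb{E}[f_x'(W_n) - W_n f_x(W_n)] = \sum_{k=1}^n \int_{\mathbb{R}} \mathbb{E}\big[f_x'(W_n) - f_x'(W_{n,k} + t)\big]\, K_{n,k}(t)\, dt.$$

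I would then truncate at level $1$. Because $K_{n,k}$ is supported on $[-|\xi_{n,k}|, |\xi_{n,k}|]$, each summand splits into a contribution from $\{|\xi_{n,k}| > 1\}$ and a contribution from $\{|\xi_{n,k}| \leq 1\}$. On $\{|\xi_{n,k}| > 1\}$ the trivial bound $|f_x'(W_n) - f_x'(W_{n,k}+t)| \leq 2$ combined with $\int K_{n,k} = \mathbb{E}[\xi_{n,k}^2]$ gives a term of order $\mathbb{E}[\xi_{n,k}^2; |\xi_{n,k}|>1]$, matching the first sum in the theorem. On $\{|\xi_{n,k}| \leq 1\}$ I would split $f_x'(W_n) - f_x'(W_{n,k}+t)$ into a Lipschitz piece, bounded by $|\xi_{n,k}-t|$ and integrating against $K_{n,k}$ to a term of order $\mathbb{E}[|\xi_{n,k}|^3; |\xi_{n,k}| \leq 1]$, plus a jump piece arising from the discontinuity of $f_x'$ at $x$.

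The main obstacle is the jump piece: since $f_x'$ has no pointwise second-derivative bound, it must be absorbed through a concentration inequality of the form
$$\mathbb{P}[a \leq W_{n,k} \leq b] \leq C_0 \Big(b - a + \sum_{j=1}^n \mathbb{E}[\xi_{n,j}^2; |\xi_{n,j}| > 1] + \sum_{j=1}^n \mathbb{E}[|\xi_{n,j}|^3; |\xi_{n,j}| \leq 1]\Big),$$
proved by an analogous Stein bootstrap applied to a smoothed version of $\mathbf{1}_{[a,b]}$. Inserting this estimate into the jump term and combining with the two contributions identified above recovers exactly the bound asserted in the theorem, with a universal constant $C$ independent of the array and of $n$.
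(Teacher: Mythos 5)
The paper contains no proof of Theorem \ref{thm:OsiFeller}: the result is imported from \cite{O66} and \cite{F68}, with the Stein-method proofs attributed to \cite{BH84} and, with the constant $4.1$, to \cite{CS01}. So there is no internal argument to compare against; the relevant comparison is with those references and with the paper's own Appendix A. Your outline is, in substance, the \cite{CS01} route: the Stein equation for the half-line indicators with the bounds $\|f_x\|_\infty\leq\sqrt{2\pi}/4$ and $\|f_x'\|_\infty\leq 1$, the $K$-function identity $\mathbb{E}[\xi_{n,k}f_x(W_n)]=\int_{\mathbb{R}}\mathbb{E}[f_x'(W_{n,k}+t)]K_{n,k}(t)\,dt$, truncation at level $1$, the decomposition of $f_x'(W_n)-f_x'(W_{n,k}+t)$ via the Stein equation itself into an increment of $w\mapsto wf_x(w)$ plus an increment of indicators, and a concentration inequality for $W_{n,k}$ to absorb the indicator increment. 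It is worth noting that this is exactly the route the paper's own machinery cannot take: Theorem \ref{thm:BigIneq} is proved by a Taylor expansion requiring $f_h''$ to be bounded, which fails for $h=\mathbf{1}_{(-\infty,x]}$, and this is precisely why the paper quotes Theorem \ref{thm:OsiFeller} for the Kolmogorov case and develops new arguments only for the Wasserstein and Prokhorov metrics.

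Two points in your sketch carry essentially all of the technical weight and are only asserted. First, the concentration inequality is the heart of the matter, and its proof is more delicate than ``a Stein bootstrap on a smoothed indicator'': one applies the $K$-function identity to a bounded piecewise-linear $f$ whose derivative dominates $\mathbf{1}_{[a,b]}$, and one must either establish a lower bound of the form $\sum_{j}\mathbb{E}[\xi_{n,j}^2;|\xi_{n,j}|\leq 1]\geq 1/2$ or dispose separately of the case where $\sum_j\mathbb{E}[\xi_{n,j}^2;|\xi_{n,j}|>1]+\sum_j\mathbb{E}[|\xi_{n,j}|^3;|\xi_{n,j}|\leq 1]$ exceeds an absolute constant (where the claimed inequality is trivially true because probabilities are at most $1$). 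Without one of these steps the bootstrap does not close. Second, a notational slip: $K_{n,k}$ is a deterministic function of $t$, so ``bounded by $|\xi_{n,k}-t|$ and integrating against $K_{n,k}$'' does not parse as written; the Lipschitz-type estimate for $w\mapsto wf_x(w)$ (see \cite{CGS11}, Lemma 2.3) must be applied inside the expectation $\mathbb{E}\bigl[\xi_{n,k}\int_0^{\xi_{n,k}}(\cdots)\,dt\bigr]$ before any expectation is taken, which is what produces the $\tfrac12\mathbb{E}[|\xi_{n,k}|^3;|\xi_{n,k}|\leq 1]$-type terms. Neither issue is fatal --- both are resolved in \cite{CS01} --- but as it stands your proposal is an accurate roadmap of that proof rather than a self-contained one.
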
 

It was shown in \cite{F68} that the constant $C$ in Theorem \ref{thm:OsiFeller} can be taken equal to 6. A proof of Theorem \ref{thm:OsiFeller} based on Stein's method was given in \cite{BH84}, and in \cite{CS01}, combining Stein's method with Chen's concentration inequality approach, it was established that $C$ can be taken equal to 4.1, the best value known so far up to our knowledge.

We will infer a corollary from Theorem \ref{thm:OsiFeller} which is related to Question \ref{vrg:vrg}. To this end, we remark that it was pointed out in \cite{L75} that the truncation at 1 in Theorem \ref{thm:OsiFeller} is optimal in the sense that
\begin{equation*}
\sum_{k=1}^n \mathbb{E}\left[\xi_{n,k}^2 ; \left|\xi_{n,k}\right| > 1\right]  + \sum_{k=1}^n\mathbb{E}\left[\left|\xi_{n,k}\right|^3 ; \left|\xi_{n,k}\right| \leq 1\right]
\end{equation*}
is dominated by 
\begin{equation*}
\sum_{k=1}^n \mathbb{E}\left[\xi_{n,k}^2 ; \xi_{n,k} \in A\right]  + \sum_{k=1}^n\mathbb{E}\left[\left|\xi_{n,k}\right|^3 ; \xi_{n,k} \in \mathbb{R} \setminus A\right]
\end{equation*}
for each Borel set $A \subset \mathbb{R}$. Therefore, we easily derive from Theorem \ref{thm:OsiFeller} that
\begin{equation*}
K\left(\xi,\sum_{k=1}^n \xi_{n,k}\right) \leq C \left( \sum_{k=1}^n \mathbb{E}\left[\xi_{n,k}^2 ; \left|\xi_{n,k}\right| > \epsilon\right]  + \epsilon\right)
\end{equation*}
for all $\epsilon > 0$, which, calculating the superior limit of both sides and letting $\epsilon \downarrow 0$, yields
\begin{equation*}
\limsup_{n \rightarrow \infty} K\left(\xi,\sum_{k=1}^n \xi_{n,k}\right) \leq C \sup_{\epsilon > 0} \limsup_{n \rightarrow \infty} \sum_{k=1}^n \mathbb{E}\left[\xi_{n,k}^2 ; \left|\xi_{n,k}\right| > \epsilon\right].
\end{equation*}

Using the numbers defined in (\ref{eq:Lin}) and (\ref{eq:LimOpd}), we now derive the following result as a corollary of Theorem \ref{thm:OsiFeller}.

\begin{thm}\label{thm:ACLTK}
Let $\xi$ be as above. Then there exists a universal constant $C > 0$ such that 
\begin{equation*}
\lambda_K\left(\sum_{k=1}^n \xi_{n,k} \rightarrow \xi\right) \leq C \textrm{\upshape{Lin}}\left(\{\xi_{n,k}\}\right)
\end{equation*}
for all STA's $\{\xi_{n,k}\}$.
\end{thm}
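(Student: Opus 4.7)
The plan is to obtain the theorem as a direct corollary of Theorem \ref{thm:OsiFeller}, following the derivation already sketched in the paragraphs preceding the statement. The key mechanism is to use the optimality observation of \cite{L75} to convert the non-scale-free right-hand side of Theorem \ref{thm:OsiFeller} into a quantity that can be made arbitrarily Lindeberg-like.

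First I would invoke Theorem \ref{thm:OsiFeller} and then apply the domination inequality from \cite{L75} with the Borel set $A = \{x \in \mathbb{R} : |x| > \epsilon\}$, which replaces the truncation level $1$ by an arbitrary $\epsilon > 0$. I would then control the remaining third-moment term by
$$\sum_{k=1}^n \mathbb{E}\left[|\xi_{n,k}|^3 ; |\xi_{n,k}| \leq \epsilon\right] \leq \epsilon \sum_{k=1}^n \mathbb{E}\left[\xi_{n,k}^2 ; |\xi_{n,k}| \leq \epsilon\right] \leq \epsilon,$$
using the STA identity $\sum_{k=1}^n \mathbb{E}[\xi_{n,k}^2] = 1$. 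This yields, for every $\epsilon > 0$,
$$K\left(\xi, \sum_{k=1}^n \xi_{n,k}\right) \leq C\left(\sum_{k=1}^n \mathbb{E}\left[\xi_{n,k}^2 ; |\xi_{n,k}| > \epsilon\right] + \epsilon\right).$$

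To finish, I would take $\limsup_{n \to \infty}$ on both sides, obtaining
$$\lambda_K\left(\sum_{k=1}^n \xi_{n,k} \to \xi\right) \leq C\limsup_{n \to \infty} \sum_{k=1}^n \mathbb{E}\left[\xi_{n,k}^2 ; |\xi_{n,k}| > \epsilon\right] + C\epsilon$$
for every $\epsilon > 0$. Since the $n$-limsup on the right is non-increasing in $\epsilon$, letting $\epsilon \downarrow 0$ recovers the supremum over $\epsilon > 0$, which by definition (\ref{eq:Lin}) equals $\textrm{\upshape{Lin}}(\{\xi_{n,k}\})$, giving the claimed bound.

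The argument carries no substantive obstacle: Theorem \ref{thm:OsiFeller} supplies the quantitative Osipov--Feller bound and the dominance of \cite{L75} does the scale adjustment. The only step deserving explicit care is the passage $\epsilon \downarrow 0$ inside the outer $\limsup_{n \to \infty}$, which is legitimate because of the monotonicity in $\epsilon$ of the truncated Lindeberg sum; this is where the fact that $\textrm{\upshape{Lin}}(\{\xi_{n,k}\})$ is defined as a supremum over $\epsilon$ (rather than a double limit) is used in an essential way.
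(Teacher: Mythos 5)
Your proposal is correct and is essentially identical to the paper's own derivation, which appears in the text immediately preceding the statement of the theorem: both apply the optimality observation of \cite{L75} with the set $A = \{|x| > \epsilon\}$ to Theorem \ref{thm:OsiFeller}, bound the truncated third moment by $\epsilon$ using $\sum_{k=1}^n \mathbb{E}[\xi_{n,k}^2] = 1$, and then take $\limsup_{n}$ followed by $\epsilon \downarrow 0$. Your explicit justification of the final limit via monotonicity of the truncated Lindeberg sum in $\epsilon$ is a correct filling-in of a step the paper leaves implicit.
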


\begin{rem}\label{rem:ACLTK}
In \cite{BLV13}, combining Stein's method with an asymptotic smoothing technique, it was established that the constant $C$ in Theorem \ref{thm:ACLTK} can be taken equal to 1 if $\{\xi_{n,k}\}$ satisfies Feller's condition.
\end{rem}

Notice that Theorem \ref{thm:ACLTK} gives a positive answer to Question \ref{vrg:vrg} in the case where $d = K$. It strictly generalizes the sufficiency of Lindeberg's condition in Theorem \ref{thm:LFCLT}, and, contrary to Theorem \ref{thm:LFCLT}, it continues to provide useful information for STA's which have a low Lindeberg index, but fail to satisfy Lindeberg's condition. More precisely, it allows us to conclude that $\left(\sum_{k=1}^n \xi_{n,k}\right)_n$ is approximately convergent to $\xi$ if $\{\xi_{n,k}\}$ approximately satisfies Lindeberg's condition. Therefore, it seems plausible to refer to Theorem \ref{thm:ACLTK} as an approximate central limit theorem.

The problem of generalizing Theorem \ref{thm:OsiFeller} to the multivariate setting is hard, and remains open. Notice however that recently, combining a multivariate version of Stein's method, as outlined in e.g. \cite{M09} and \cite{NPR10}, with the establishment of an explicit integral representation of a solution to the Stein PDE with a character function as test function, a partial extension of Theorem \ref{thm:ACLTK} for the Fourier transforms of random vectors has been obtained in \cite{BLV}.

In this paper, we will focus on the following question concerning Theorem \ref{thm:ACLTK}.\\

{\em Can we widen the scope of applicability of Theorem \ref{thm:ACLTK} by extending it to other probability metrics $d$?}

The paper is structured as follows.

A short overview of some important probability metrics is given in section 2.

In section 3, we show that it is possible to apply the techniques used in \cite{BLV13} to a large class of test functions, leading to a general inequality. 

The inequality presented in section 3 leads to approximate central limit theorems, similar to Theorem \ref{thm:ACLTK}, for the Wasserstein distance and the parametrized Prokhorov distances. These are given in section 4. An example shows that a result of the same flavor cannot be obtained for the total variation distance.

\section{Some probability metrics}

Let $\mathcal{P}(\mathbb{R})$ be the collection of Borel probability measures on $\mathbb{R}$.  Furthermore, let $\mathcal{P}_{1}(\mathbb{R})$ be the set of all  $P \in \mathcal{P}(\mathbb{R})$ with finite absolute first moment, i.e. for which $\int_{-\infty}^\infty \left|x\right| dP(x) < \infty$.

The Wasserstein distance on $\mathcal{P}_1(\mathbb{R})$, see e.g. \cite{V03}, is defined by the formula
\begin{equation*}
W(P,Q) = \myinf_{\pi} \int_{\mathbb{R} \times \mathbb{R}} d(x,y) d\pi(x,y),
\end{equation*} 
where the infimum is taken over all Borel probability measures $\pi$ on $\mathbb{R} \times \mathbb{R}$ with first marginal $P$ and second marginal $Q$. Kantorovich duality theory implies that the metric $W$ can also be written as
\begin{equation}
W(P,Q) = \sup_{h \in \mathcal{K}(\mathbb{R})} \left|\int_\mathbb{R} h dP - \int_\mathbb{R} h dQ\right|\label{rep:WassersteinDual},
\end{equation}
where $\mathcal{K}(\mathbb{R})$ stands for the set of all contractions $h : \mathbb{R} \rightarrow \mathbb{R}$, where $h$ is called a contraction iff $\left|h(x) - h(y)\right| \leq \left|x - y\right|$ for all $x,y \in \mathbb{R}$. Also, we have
\begin{equation*}
W(P,Q) = \int_{-\infty}^\infty \left|F_P(x) - F_Q(x)\right| dx = \int_0^1 \left|F_P^{-1}(t) - F_Q^{-1}(t)\right|dt,
\end{equation*}
with $F_P$ (respectively $F_Q$) the cumulative distribution function associated with $P$ (respectively $Q$), and $F_P^{-1}$ (respectively $F_Q^{-1}$) its generalized inverse.

The topology underlying the Wasserstein distance is slightly stronger than the weak topology. More precisely, for $P$ and $(P_n)_n$ in $\mathcal{P}_1(\mathbb{R})$, it holds that $\lim_{n \rightarrow \infty}W(P,P_n) = 0$ is equivalent with weak convergence of $(P_n)_n$ to $P$ in addition to convergence of $\left(\int_{-\infty}^\infty \left|x\right| dP_n(x)\right)_n$ to $\int_{-\infty}^\infty \left|x\right| dP(x)$. Also, the Wasserstein distance is separable and complete, see \cite{B08}.

Furthermore, for $\lambda \in \mathbb{R}^+_0$, the (parametrized) Prokhorov distance $\rho_\lambda(P,Q)$\index{$\rho_\lambda(P,Q)$} between probability measures $P$ and $Q$ in $\mathcal{P}(\mathbb{R})$ is defined to be the infimum of all positive numbers $\alpha \in  \mathbb{R}^+_0$ for which the inequality
\begin{equation*}
P\left[A\right] \leq Q\left[A^{(\lambda \alpha)}\right] + \alpha,\label{def:ProkhMetric}
\end{equation*}
with 
\begin{equation*}
A^{(\lambda \alpha)} = \left\{x \in \mathbb{R} \mid \inf_{a \in A} \left|x - a\right| \leq \lambda \alpha\right\},\index{$A^{(\alpha)}$}
\end{equation*}
holds for every Borel set $A \subset \mathbb{R}$. One easily establishes that 
\begin{equation*}
\rho_{\lambda_1}(P,Q) \leq \rho_{\lambda_2}(P,Q)\label{ProkhorovIncreases}
\end{equation*}
whenever $\lambda_2 \leq \lambda_1$. In \cite{B99} it is shown that, for each $\lambda \in \mathbb{R}^+_0$, $\rho_\lambda$ is a separable and complete metric which metrizes weak convergence of probability measures.

Finally, the total variation distance $d_{TV}(P,Q)$\index{$d_{TV}(P,Q)$} between probability measures $P$ and $Q$ in $\mathcal{P}(\mathbb{R})$ is defined by the number
\begin{equation*}
d_{TV}(P,Q) = \sup_{A} \left|P[A] - Q[A]\right|,
\end{equation*} 
the supremum of course taken over all Borel sets $A \subset \mathbb{R}$. One easily verifies that $d_{TV}$ is a complete metric, that, for each $\lambda \in \mathbb{R}^+_0$,
\begin{equation*}
\rho_\lambda(P,Q) \leq d_{TV}(P,Q),
\end{equation*}
and that the limit relation
\begin{equation}
\lim_{\lambda \downarrow 0} \rho_\lambda(P,Q) = d_{TV}(P,Q)\label{LimProkhTV}
\end{equation}
holds true. Note however that $d_{TV}$ is not separable and that its underlying topology is strictly stronger than the weak topology.

For a general and systematic treatment of the theory of probability metrics, we refer the reader to the excellent expositions \cite{Z83} and \cite{R91}.

\section{A general inequality}

Let $\xi$ be as in Section \ref{sec:Intro} and $h : \mathbb{R} \rightarrow \mathbb{R}$ a continuous map for which $\mathbb{E}\left|h(\xi)\right| < \infty$. Then the Stein transform of $h$ is the map $f_h : \mathbb{R} \rightarrow \mathbb{R}$ defined by the formula
\begin{equation}
f_h(x) = e^{x^2/2} \int_{-\infty}^x \left(h(t) - \mathbb{E}[h(\xi)]\right) e^{-t^2/2} dt.\label{eq:SteinTransform}
\end{equation}
The crux of Stein's method is that, for any random variable $\eta$, we have
\begin{equation*}
\mathbb{E}\left[h(\xi) - h(\eta)\right] = \mathbb{E}[\eta f_h(\eta) - f_h^\prime(\eta)],
\end{equation*}
and that, in many cases, it is easier to find upper bounds for the derivatives of $f_h$ than for the derivatives of $h$, see e.g. \cite{BC05} and \cite{CGS11}.

We will now establish a general inequality in terms of the Stein transform, which will allow us to extend Theorem \ref{thm:ACLTK} to many of the above described probability metrics. For the proof, it basically suffices to notice that the techniques developed in \cite{BLV13} can be extended to a very general collection of test functions. For the sake of completeness, we present the proof in Appendix A.

\begin{thm}\label{thm:BigIneq}
Let $\xi$ and $\{\xi_{n,k}\}$ be as in Section \ref{sec:Intro}, and let $h : \mathbb{R} \rightarrow \mathbb{R}$ be any continuously differentiable map with a bounded derivative. Then the Stein transform $f_h$, defined by (\ref{eq:SteinTransform}), is twice continuously differentiable, has bounded first and second derivatives, and the inequality
\begin{eqnarray}
\lefteqn{\left|\mathbb{E}\left[h(\xi) - h\left(\sum_{k=1}^n \xi_{n,k}\right)\right]\right|}\label{eq:BigIneq}\\
&\leq& \frac{1}{2}\|f_h^{\prime \prime}\|_\infty \epsilon + \left(\sup_{x_1,x_2 \in \mathbb{R}} \left|f_h^\prime(x_1) - f_h^\prime(x_2)\right|\right) \sum_{k=1}^n \mathbb{E}\left[\xi_{n,k}^2 ; \left|\xi_{n,k}\right| \geq \epsilon\right]\nonumber\\
&&+ \left(\sup_{x_1,x_2 \in \mathbb{R}} \left|f_h^{\prime \prime}(x_1) - f_h^{\prime \prime}(x_2)\right|\right) \max_{k=1}^n \mathbb{E}\left[\left|\xi_{n,k}\right|\right]\nonumber
\end{eqnarray}
holds for all $n$ and all $\epsilon > 0$.
\end{thm}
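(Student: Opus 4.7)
The strategy is to apply the Stein identity $\mathbb{E}[h(\xi)-h(\eta)] = \mathbb{E}[\eta f_h(\eta) - f_h'(\eta)]$ with $\eta = \sum_{k=1}^n \xi_{n,k}$, and then to decompose the right-hand side using independence of the $\xi_{n,k}$ together with $\mathbb{E}[\xi_{n,k}]=0$ and $\sum_k \mathbb{E}[\xi_{n,k}^2]=1$, obtaining Taylor-type pieces that are controlled by $\|f_h''\|_\infty\epsilon$ where each summand is small and by the oscillations of $f_h'$ and $f_h''$ elsewhere. The regularity of $f_h$ is immediate from (\ref{eq:SteinTransform}): differentiating once yields the Stein equation $f_h'(x) = xf_h(x) + h(x) - \mathbb{E}[h(\xi)]$, so $f_h \in C^1$, and differentiating again gives $f_h''(x) = f_h(x) + xf_h'(x) + h'(x)$, so $f_h \in C^2$ since $h \in C^1$; boundedness of $f_h$, $f_h'$ and $f_h''$ when $\|h'\|_\infty < \infty$ is classical Stein-method input, obtained by standard estimates on the integral representation of $f_h$, as in \cite{BC05} and \cite{CGS11}.

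Setting $W_k = \eta - \xi_{n,k}$, so that $W_k$ and $\xi_{n,k}$ are independent, the vanishing means yield
\begin{equation*}
\mathbb{E}[\eta f_h(\eta)] = \sum_{k=1}^n \mathbb{E}[\xi_{n,k}(f_h(\eta) - f_h(W_k))] = \sum_{k=1}^n \mathbb{E}\!\left[\xi_{n,k}^2 \int_0^1 f_h'(W_k + s\xi_{n,k})\,ds\right],
\end{equation*}
while $\sum_k\mathbb{E}[\xi_{n,k}^2]=1$ allows the rewriting $\mathbb{E}[f_h'(\eta)] = \sum_{k=1}^n \mathbb{E}[\xi_{n,k}^2]\mathbb{E}[f_h'(\eta)]$. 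Inserting and subtracting $\mathbb{E}[\xi_{n,k}^2 f_h'(W_k)] = \mathbb{E}[\xi_{n,k}^2]\mathbb{E}[f_h'(W_k)]$ (the last equality by independence), I obtain
\begin{equation*}
\mathbb{E}[\eta f_h(\eta)-f_h'(\eta)] = \sum_{k=1}^n \mathbb{E}\!\left[\xi_{n,k}^2 \!\int_0^1\!(f_h'(W_k + s\xi_{n,k})-f_h'(W_k))\,ds\right] + \sum_{k=1}^n \mathbb{E}[\xi_{n,k}^2](\mathbb{E}[f_h'(W_k)]-\mathbb{E}[f_h'(\eta)]).
\end{equation*}

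The first sum I split on $\{|\xi_{n,k}|<\epsilon\}$ and $\{|\xi_{n,k}|\geq\epsilon\}$: on the small set, $|f_h'(W_k+s\xi_{n,k})-f_h'(W_k)|\leq s\|f_h''\|_\infty|\xi_{n,k}|$ together with $\int_0^1 s\,ds=\tfrac12$, $|\xi_{n,k}|\leq\epsilon$, and $\sum_k \mathbb{E}[\xi_{n,k}^2]\leq 1$, produces the first term $\tfrac12\|f_h''\|_\infty\epsilon$; on the large set, bounding the integrand by $\sup_{x_1,x_2}|f_h'(x_1)-f_h'(x_2)|$ produces the second term of (\ref{eq:BigIneq}). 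For the second sum, I write $f_h'(\eta)-f_h'(W_k) = \int_0^{\xi_{n,k}} f_h''(W_k+t)\,dt$, and subtract the identically zero quantity $\mathbb{E}[\xi_{n,k} f_h''(W_k)] = \mathbb{E}[\xi_{n,k}]\mathbb{E}[f_h''(W_k)]$ to obtain $|\mathbb{E}[f_h'(\eta)-f_h'(W_k)]|\leq (\sup_{x_1,x_2}|f_h''(x_1)-f_h''(x_2)|)\mathbb{E}|\xi_{n,k}|$; factoring $\max_k \mathbb{E}|\xi_{n,k}|$ out of the sum and invoking $\sum_k \mathbb{E}[\xi_{n,k}^2]=1$ yields the third term. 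I expect the main technical subtlety to be exactly this last centering: a crude bound by $\|f_h''\|_\infty\mathbb{E}|\xi_{n,k}|$ in the second sum would be too weak to produce the approximate CLTs of Section 4, and it is the second application of $\mathbb{E}[\xi_{n,k}]=0$, which replaces $\|f_h''\|_\infty$ by the \emph{oscillation} of $f_h''$, that will ultimately allow those extensions to the Wasserstein and Prokhorov metrics.
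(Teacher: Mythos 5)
Your proof is correct and follows essentially the same route as the paper's Appendix A: the leave-one-out decomposition with $W_k = \sum_{i\neq k}\xi_{n,i}$ is exactly the identity of Lemma 2 there (your two sums are $\sum_k\mathbb{E}[\xi_{n,k}\delta_{n,k}]$ and $-\sum_k\sigma_{n,k}^2\mathbb{E}[\epsilon_{n,k}]$), and your $\epsilon$-truncation and oscillation bounds reproduce the paper's Taylor-remainder Lemma 3 in integral form. The regularity claims and the final assembly also match the paper's argument.
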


\section{Approximate central limit theorems}

We will apply Theorem \ref{thm:BigIneq} to obtain results similar to Theorem \ref{thm:ACLTK} for the Wasserstein distance (Theorem \ref{thm:ACLTW}) and the parametrized Prokhorov distances (Theorem \ref{thm:ACLTP}). Where needed, we tacitly transport these probability metrics to random variables via their image measures. 

The following lemma guarantees that we can capture the Wasserstein distance with continuously differentiable contractions.

\begin{lem}\label{lem:SmoothingWasserstein}
The Wasserstein distance on $\mathcal{P}_1(\mathbb{R})$ is given by
\begin{equation}
W(P,Q) = \sup_{h \in \mathcal{K}_c(\mathbb{R})} \left|\int h dP - \int h dQ\right|,\label{eq:WassSmooth}
\end{equation}
where $\mathcal{K}_c(\mathbb{R})$ stands for the set of all continuously differentiable contractions $h : \mathbb{R} \rightarrow \mathbb{R}$. 
\end{lem}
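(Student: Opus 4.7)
The plan is to start from the Kantorovich dual representation (\ref{rep:WassersteinDual}), which expresses $W(P,Q)$ as a supremum over the larger class $\mathcal{K}(\mathbb{R})$ of all (possibly non-smooth) contractions, and then to show that restricting to the subclass $\mathcal{K}_c(\mathbb{R})$ of continuously differentiable contractions does not decrease the supremum. Since $\mathcal{K}_c(\mathbb{R}) \subseteq \mathcal{K}(\mathbb{R})$, the inequality ``$\leq$'' in (\ref{eq:WassSmooth}) is immediate, so the entire content of the lemma is the reverse inequality.

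For the reverse inequality I would use a standard mollification argument. Fix $h \in \mathcal{K}(\mathbb{R})$, pick a nonnegative $C^\infty$ bump $\phi$ supported in $[-1,1]$ with $\int \phi = 1$, set $\phi_\delta(x) = \delta^{-1}\phi(x/\delta)$ for $\delta > 0$, and define $h_\delta = h * \phi_\delta$. Two routine facts to verify are: (i) $h_\delta \in \mathcal{K}_c(\mathbb{R})$, because differentiation under the integral gives $h_\delta \in C^\infty(\mathbb{R})$, and the contraction property is preserved under averaging:
\begin{equation*}
\left|h_\delta(x) - h_\delta(y)\right| \leq \int |h(x-t) - h(y-t)| \phi_\delta(t)\,dt \leq |x - y|;
\end{equation*}
and (ii) $h_\delta \to h$ uniformly on $\mathbb{R}$ at rate $\delta$, since
\begin{equation*}
\left|h_\delta(x) - h(x)\right| \leq \int |h(x-t) - h(x)| \phi_\delta(t)\,dt \leq \int |t| \phi_\delta(t)\,dt \leq \delta.
\end{equation*}

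To conclude, I would pass to the limit inside the integrals. Because $h$ is a contraction one has $|h(x)| \leq |h(0)| + |x|$, and similarly for each $h_\delta$, so both $h$ and $h_\delta$ lie in $L^1(P) \cap L^1(Q)$ for $P, Q \in \mathcal{P}_1(\mathbb{R})$. The uniform bound $\|h_\delta - h\|_\infty \leq \delta$ then gives
\begin{equation*}
\left|\int h\,dP - \int h\,dQ\right| \leq \left|\int h_\delta\,dP - \int h_\delta\,dQ\right| + 2\delta \leq \sup_{g \in \mathcal{K}_c(\mathbb{R})} \left|\int g\,dP - \int g\,dQ\right| + 2\delta.
\end{equation*}
Letting $\delta \downarrow 0$ and then taking the supremum over $h \in \mathcal{K}(\mathbb{R})$ on the left yields, via (\ref{rep:WassersteinDual}), the inequality ``$\geq$'' in (\ref{eq:WassSmooth}).

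There is no real obstacle here; the only mild care is in handling integrability against $P, Q$ (which is supplied by $\mathcal{P}_1(\mathbb{R})$ together with the linear growth of contractions) and in checking that mollification preserves the Lipschitz constant exactly, rather than only up to a multiplicative factor. Both are immediate once one recognises mollification as an average of translates.
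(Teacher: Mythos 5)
Your proposal is correct and follows essentially the same route as the paper: mollify a given contraction by a smooth bump supported in a small interval, check that convolution preserves the Lipschitz constant exactly and approximates $h$ uniformly to within the support radius, and then conclude via the Kantorovich dual representation (\ref{rep:WassersteinDual}). The only difference is cosmetic — you spell out the final $2\delta$ estimate and the integrability of contractions against measures in $\mathcal{P}_1(\mathbb{R})$, which the paper leaves implicit.
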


\begin{proof}
Let $h : \mathbb{R} \rightarrow \mathbb{R}$ be a contraction and fix $\epsilon > 0$. We will show that there exists a smooth contraction which is closer than $\epsilon$ to $h$ for the $\|\cdot\|_\infty$-norm. Once this is established, the lemma will follow from formula (\ref{rep:WassersteinDual}).

Let $$\psi_{\epsilon} : \mathbb{R} \rightarrow \mathbb{R}$$ be positive and smooth, with support contained in the interval $[-\epsilon,\epsilon]$, and such that $$\int_{\mathbb{R}} \psi_{\epsilon}(y) dy = 1.$$ Put
\begin{displaymath}
h_{\epsilon}(x) = (h \star \psi_{\epsilon})(x) = \int_{\mathbb{R}} h(x - y) \psi_{\epsilon}(y) dy = \int_{\mathbb{R}} \psi_{\epsilon}(x - y) h(y) dy.
\end{displaymath}
Then $h_{\epsilon}$ is smooth. Furthermore, for $x_1,x_2 \in \mathbb{R}$,
\begin{eqnarray*}
\lefteqn{\left|h_\epsilon(x_1) - h_\epsilon(x_2)\right|}\\
&=&  \left|\int_{\mathbb{R}} h(x_1 - y) \psi_{\epsilon}(y) dy - \int_{\mathbb{R}} h(x_2 - y) \psi_{\epsilon}(y) dy\right|\\
&\leq& \int_{\mathbb{R}} \left|h(x_1 - y) - h(x_2 - y)\right| \psi_{\epsilon}(y) dy,
\end{eqnarray*}
which is, $h$ being a contraction, bounded by $\int_{\mathbb{R}} \psi_{\epsilon}(y) dy = 1,$ and we infer that $h_{\epsilon}$ is also a contraction. Finally, for $x \in \mathbb{R}$, 
\begin{eqnarray}
\lefteqn{\left|h(x) - h_\epsilon(x)\right|}\nonumber\\
&=& \left|\int_{\mathbb{R}} \left(h(x) - h(x - y)\right) \psi_{\epsilon}(y) dy\right|\nonumber\\
&=& \left|\int_{-\epsilon}^\epsilon \left(h(x) - h(x-y)\right) \psi_\epsilon(y) dy\right|,\label{eq:hheleq1}
\end{eqnarray}
the last equality following from the fact that the support of $\psi_\epsilon$ is contained in $[-\epsilon,\epsilon]$. Now, $h$ being a contraction, it follows that the expression in (\ref{eq:hheleq1}) is bounded by $\epsilon$, whence 
$$\|h - h_\epsilon\|_\infty < \epsilon.$$
This concludes the proof.
\end{proof}

The following lemma belongs to the basics of Stein's method, see e.g. \cite{BC05}, p.10-11.

\begin{lem}\label{lem:ACLTWbounds}
Let $h$ and $f_h$ be as in Theorem \ref{thm:BigIneq}. Then
\begin{equation}
\|f_h^\prime\|_\infty \leq 4 \|h^\prime\|_\infty\label{eq:lemACLTW1}
\end{equation}
and
\begin{equation}
\|f_h^\prime\|_\infty \leq 2 \|\mathbb{E}[h(\xi)] - h\|_\infty\label{eq:lemACLTP} 
\end{equation}
and
\begin{equation}
\|f_{h}^{\prime \prime}\|_\infty \leq 2 \|h^\prime\|_\infty.\label{eq:lemACLTW2}
\end{equation}
\end{lem}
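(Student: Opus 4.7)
The plan is to base all three bounds on the \emph{Stein equation}, obtained by direct differentiation of (\ref{eq:SteinTransform}):
\begin{equation*}
f_h'(x) - x f_h(x) = h(x) - \mathbb{E}[h(\xi)] =: \tilde h(x).
\end{equation*}
Since $\mathbb{E}[\tilde h(\xi)] = 0$, there is also the dual representation
\begin{equation*}
f_h(x) = -e^{x^2/2} \int_x^\infty \tilde h(t)\, e^{-t^2/2}\,dt.
\end{equation*}
Twice continuous differentiability of $f_h$ then comes for free: the first differentiation gives the Stein equation, and a second one yields $f_h''(x) = (1 + x^2) f_h(x) + x \tilde h(x) + h'(x)$, which is continuous since $h'$ is. The three boundedness assertions then rest on the Mills-ratio estimate $|x|\, e^{x^2/2} \int_{|x|}^\infty e^{-t^2/2}\,dt \leq 1$.

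For (\ref{eq:lemACLTP}) I would use the dual representation on $x \geq 0$ and the original one on $x \leq 0$; in either case Mills' estimate delivers $|x f_h(x)| \leq \|\tilde h\|_\infty$, and the Stein equation then gives $|f_h'(x)| \leq |x f_h(x)| + |\tilde h(x)| \leq 2\|\tilde h\|_\infty = 2\|\mathbb{E}[h(\xi)] - h\|_\infty$.

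The bounds (\ref{eq:lemACLTW1}) and (\ref{eq:lemACLTW2}) require more care, because $\tilde h$ itself may be unbounded when only $h'$ is. My plan is to re-express $f_h$ directly in terms of $h'$: writing $\tilde h(t) = \int (h(t) - h(y))\varphi(y)\,dy$ with $\varphi$ the standard normal density, expanding $h(t) - h(y) = \int_y^t h'(u)\,du$, and applying Fubini, one arrives at
\begin{equation*}
f_h(x) = -\int_{\mathbb{R}} h'(u)\, K(u,x)\,du,
\end{equation*}
where $K(u,x) = \sqrt{2\pi}\, e^{x^2/2}\,\Phi(\min(u,x))(1 - \Phi(\max(u,x)))$ is, up to sign, the Stein transform of the indicator test function $\mathbb{I}(\cdot \leq u)$. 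Differentiating in $x$, splitting the $u$-integral at $u = x$ (the jump of $\partial_x K$ at $u = x$ accounts for the $h'(x)$-term in the Stein equation for $f_h''$), and invoking the identities $\sqrt{2\pi}\, e^{x^2/2}\varphi(x) = 1$ and $\int_x^\infty (1 - \Phi) = \varphi(x) - x(1 - \Phi(x))$ together with its mirror, one verifies uniform-in-$x$ $L^1$-in-$u$ bounds on $\partial_x K$ and on the absolutely continuous part of $\partial_x^2 K$ of sizes $4$ and $2$ respectively, which upon pulling $\|h'\|_\infty$ outside yield (\ref{eq:lemACLTW1}) and (\ref{eq:lemACLTW2}).

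The main obstacle is this last step: the kernels $\partial_x K$ and $\partial_x^2 K$ each contain individual summands that grow in $x$, and the desired uniform-in-$x$ $L^1$ bounds rely on a precise cancellation between the contributions from $\{u < x\}$ and $\{u > x\}$. This is a classical computation, carried out in detail in \cite{BC05}, which is why this lemma belongs to the basic toolkit of Stein's method.
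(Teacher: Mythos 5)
Your proposal is correct, and it matches the paper's treatment: the paper does not prove this lemma at all but simply cites the standard bounds from \cite{BC05}, pp.\ 10--11, and your outline (Stein equation, the dual representation $f_h(x) = -e^{x^2/2}\int_x^\infty \tilde h\, e^{-t^2/2}\,dt$, the Mills-ratio estimate for (\ref{eq:lemACLTP}), and the kernel representation $f_h(x) = -\int h'(u)K(u,x)\,du$ for (\ref{eq:lemACLTW1}) and (\ref{eq:lemACLTW2})) is precisely a reconstruction of that classical argument. The one step you defer --- the uniform $L^1$ bounds on $\partial_x K$ and $\partial_x^2 K$ yielding the constants $4$ and $2$ --- is exactly the computation the paper itself outsources to the same reference, so nothing is missing relative to the paper.
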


\begin{thm}\label{thm:ACLTW}
Let $\xi$ be as in Section \ref{sec:Intro}. Then there exists a universal constant $C_W > 0$ such that 
\begin{equation*}
\lambda_W\left(\sum_{k=1}^n \xi_{n,k} \rightarrow \xi\right) \leq C_W \textrm{\upshape{Lin}}\left(\{\xi_{n,k}\}\right)
\end{equation*}
for all STA's $\{\xi_{n,k}\}$ which satisfy Feller's condition (\ref{eq:FelCon}). Moreover, $C_W$ can be taken equal to $8$.
\end{thm}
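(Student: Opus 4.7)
The plan is to invoke the dual representation of the Wasserstein distance over smooth contractions (Lemma \ref{lem:SmoothingWasserstein}) and feed this class of test functions into the general Stein-type inequality of Theorem \ref{thm:BigIneq}. More precisely, any $h \in \mathcal{K}_c(\mathbb{R})$ is continuously differentiable with $\|h'\|_\infty \leq 1$, so Theorem \ref{thm:BigIneq} applies and the Stein transform $f_h$ has bounded first and second derivatives. The task reduces to bounding each of the three factors appearing in (\ref{eq:BigIneq}) by a constant depending only on $\|h'\|_\infty \leq 1$, and then tracking how the resulting inequality behaves under $\limsup_n$ and $\epsilon \downarrow 0$.

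Using Lemma \ref{lem:ACLTWbounds}, I would bound $\tfrac12 \|f_h''\|_\infty \leq \|h'\|_\infty \leq 1$, $\sup_{x_1,x_2}|f_h'(x_1)-f_h'(x_2)| \leq 2\|f_h'\|_\infty \leq 8\|h'\|_\infty \leq 8$, and $\sup_{x_1,x_2}|f_h''(x_1)-f_h''(x_2)| \leq 2\|f_h''\|_\infty \leq 4\|h'\|_\infty \leq 4$. Substituting these into (\ref{eq:BigIneq}) yields
\begin{equation*}
\left|\mathbb{E}\left[h(\xi)-h\!\left(\sum_{k=1}^n \xi_{n,k}\right)\right]\right| \leq \epsilon + 8 \sum_{k=1}^n \mathbb{E}\!\left[\xi_{n,k}^2;|\xi_{n,k}|\geq \epsilon\right] + 4 \max_{k=1}^n \mathbb{E}|\xi_{n,k}|
\end{equation*}
uniformly in $h \in \mathcal{K}_c(\mathbb{R})$, so by Lemma \ref{lem:SmoothingWasserstein} the same bound holds for $W(\xi,\sum_k \xi_{n,k})$.

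To kill the leftover term I would use Feller's condition: Cauchy--Schwarz gives $\max_k \mathbb{E}|\xi_{n,k}| \leq \sqrt{\max_k \mathbb{E}[\xi_{n,k}^2]} \longrightarrow 0$ as $n\to\infty$. Taking $\limsup_n$ in the inequality above, and observing that for every fixed $\epsilon > 0$ one has $\limsup_n \sum_k \mathbb{E}[\xi_{n,k}^2;|\xi_{n,k}|\geq \epsilon] \leq \limsup_n \sum_k \mathbb{E}[\xi_{n,k}^2;|\xi_{n,k}| > \epsilon/2] \leq \mathrm{Lin}(\{\xi_{n,k}\})$, we obtain
\begin{equation*}
\lambda_W\!\left(\sum_{k=1}^n \xi_{n,k} \rightarrow \xi\right) \leq \epsilon + 8\, \mathrm{Lin}(\{\xi_{n,k}\})
\end{equation*}
for every $\epsilon > 0$. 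Letting $\epsilon \downarrow 0$ yields the claim with $C_W = 8$.

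There is no real obstacle here once Theorem \ref{thm:BigIneq} and Lemmas \ref{lem:SmoothingWasserstein} and \ref{lem:ACLTWbounds} are in place; the only subtle point is that Feller's condition is genuinely needed to handle the $\max_k \mathbb{E}|\xi_{n,k}|$ term, which is why the hypothesis appears in the statement. The constant $8$ comes out precisely from bound (\ref{eq:lemACLTW1}) combined with the doubling from passing to $\sup_{x_1,x_2}|f_h'(x_1)-f_h'(x_2)|$.
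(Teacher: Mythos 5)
Your proposal is correct and follows essentially the same route as the paper: the dual representation over smooth contractions (Lemma \ref{lem:SmoothingWasserstein}), the bounds of Lemma \ref{lem:ACLTWbounds} fed into Theorem \ref{thm:BigIneq}, and then Feller's condition to eliminate the $\max_k \mathbb{E}\left|\xi_{n,k}\right|$ term before letting $\epsilon \downarrow 0$. The only difference is that you spell out two steps the paper leaves implicit, namely the Cauchy--Schwarz argument for $\max_k \mathbb{E}\left|\xi_{n,k}\right| \rightarrow 0$ and the comparison of the truncation at $\left|\xi_{n,k}\right| \geq \epsilon$ with the strict inequality in the definition of $\textrm{\upshape{Lin}}$.
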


\begin{proof}
Let $h : \mathbb{R} \rightarrow \mathbb{R}$ be a continuously differentiable contraction. Then
\begin{equation}
\sup_{x_1,x_2 \in \mathbb{R}} \left|f_h^\prime(x_1) - f_h^\prime(x_2)\right| \leq 2 \|f_h^\prime\|_\infty \leq 8 \|h^\prime\|_\infty \leq 8,\label{eq:ACLTW1}
\end{equation}
by (\ref{eq:lemACLTW1}), and
\begin{equation}
\sup_{x_1,x_2 \in \mathbb{R}} \left|f_h^{\prime \prime}(x_1) - f_h^{\prime \prime} (x_2)\right| \leq 2 \|f_h^{\prime \prime}\|_\infty \leq 4 \|h^\prime\|_\infty \leq 4\label{eq:ACLTW2},
\end{equation}
by (\ref{eq:lemACLTW2}). Furthermore, combining (\ref{eq:BigIneq}) with (\ref{eq:lemACLTW2}), (\ref{eq:ACLTW1}), and (\ref{eq:ACLTW2}), yields
\begin{eqnarray}
\lefteqn{\left|\mathbb{E}\left[h(\xi) - h\left(\sum_{k=1}^n \xi_{n,k}\right)\right]\right|}\label{eq:ACLTW3}\\
&\leq&  \epsilon + 8 \sum_{k=1}^n \mathbb{E}\left[\xi_{n,k}^2 ; \left|\xi_{n,k}\right| \geq \epsilon\right] + 4 \max_{k=1}^n \mathbb{E}\left[\left|\xi_{n,k}\right|\right]\nonumber
\end{eqnarray}
for all STA's $\{\xi_{n,k}\}$, all $n$, and all $\epsilon > 0$. Finally, assuming that $\{\xi_{n,k}\}$ satisfies Feller's condition, taking the supremum over all $h \in \mathcal{K}_c(\mathbb{R})$, calculating the superior limits, and letting $\epsilon \downarrow 0$, we see that that (\ref{eq:WassSmooth}) and (\ref{eq:ACLTW3}) lead to the desired result.
\end{proof}

Lemma \ref{lem:Ph} reveals that we can capture all parametrized Prokhorov distances by one collection of smooth test functions. It can be derived indirectly from \cite{BLV11} (Section 2, Lemma 2.2), where the so-called weak approach structure on the set of probability measures on a separable metric space was studied, see also \cite{L15}. As it is a crucial step to obtain an approximate central limit theorem for the parametrized Prokhorov distances, we will present a direct proof here.

\begin{lem}\label{lem:Ph}
Let $\mathcal{H}$ be the collection of continuously differentiable maps $h : \mathbb{R} \rightarrow [0,1]$ with a bounded derivative. Then, for $P$ and $(P_n)_n$ in $\mathcal{P}(\mathbb{R})$,
\begin{equation}
\sup_{\lambda > 0} \limsup_{n \rightarrow \infty} \rho_\lambda(P,P_n) = \sup_{h \in \mathcal{H}} \limsup_{n \rightarrow \infty} \left|\int_{\mathbb{R}} h dP - \int_{\mathbb{R}} h dP_n\right|.\label{eq:Ph}
\end{equation}
\end{lem}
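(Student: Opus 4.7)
The plan is to prove the two inequalities of the displayed equality separately. Write $A$ for the left-hand side and $B$ for the right-hand side of \eqref{eq:Ph}. The inequality $B\le A$ is routine, while $A\le B$ will require a contradiction argument combined with Arzelà--Ascoli and tightness of the single measure $P$.

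For $B\le A$, fix $h\in\mathcal{H}$ with $L:=\|h'\|_\infty$. I would first derive the pointwise bound
\[\Bigl|\int_{\mathbb{R}}h\,dP-\int_{\mathbb{R}}h\,dQ\Bigr|\le (1+L\lambda)\,\rho_\lambda(P,Q)\]
valid for all $\lambda>0$ and $P,Q\in\mathcal{P}(\mathbb{R})$. The idea is the layer-cake formula $\int h\,dP=\int_0^1 P(h>t)\,dt$ together with the Lipschitz-induced inclusion $\{h>t\}^{(\lambda\alpha)}\subset\{h>t-L\lambda\alpha\}$: for $\alpha>\rho_\lambda(P,Q)$ this yields $P(h>t)\le Q(h>t-L\lambda\alpha)+\alpha$, and integration over $t\in[0,1]$ combined with $0\le h\le 1$ (to absorb the shift in the integrand) produces the factor $1+L\lambda$. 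Applying this with $Q=P_n$, taking $\limsup_n$ (which is at most $A$ regardless of $\lambda$), letting $\lambda\downarrow 0$, and finally $\sup_{h\in\mathcal{H}}$ gives $B\le A$.

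For $A\le B$ I would argue by contradiction. Suppose there exist $\lambda_0>0$ and $\beta\in(B,\limsup_n\rho_{\lambda_0}(P,P_n))$, and along a subsequence $(n_k)$ with $\rho_{\lambda_0}(P,P_{n_k})>\beta$ the definition of $\rho_{\lambda_0}$ yields Borel sets $A_k$ with $P(A_k)-P_{n_k}(A_k^{(\lambda_0\beta)})>\beta$. The first key step is to exploit the tightness of $P$: choose $\epsilon\in(0,\beta-B)$ and $N$ such that $P([-N,N]^c)<\epsilon$, and set $A_k':=A_k\cap[-N,N]$. Using $(A_k')^{(\lambda_0\beta)}\subset A_k^{(\lambda_0\beta)}$, the inequality survives as
\[P(A_k')-P_{n_k}((A_k')^{(\lambda_0\beta)})>\beta-\epsilon,\]
and now every $A_k'$ sits in the fixed compact interval $[-N,N]$. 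The second key step is smoothing: fix a smooth mollifier $\psi_\delta$ of width $\delta<\lambda_0\beta/4$, take $g_k$ to be the piecewise-linear cutoff of $d(\cdot,A_k')$ equal to $1$ on the $\delta$-thickening of $A_k'$ and vanishing beyond $(A_k')^{(\lambda_0\beta-\delta)}$, and set $h_k:=g_k\star\psi_\delta$. Then $h_k\in\mathcal{H}$ with $\mathbf{1}_{A_k'}\le h_k\le\mathbf{1}_{(A_k')^{(\lambda_0\beta)}}$, uniform bounds on $\|h_k'\|_\infty$ and $\|h_k''\|_\infty$, and supports in the common compact $[-N-\lambda_0\beta,N+\lambda_0\beta]$; consequently $\int h_k\,dP-\int h_k\,dP_{n_k}>\beta-\epsilon$ for every $k$.

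An Arzelà--Ascoli extraction applied simultaneously to $h_k$ and $h_k'$ on this common compact support produces a subsequence along which $h_{k_j}$ and $h_{k_j}'$ converge uniformly to $h^\infty\in\mathcal{H}$; uniform convergence on a set containing every $\mathrm{supp}\,h_k$ makes the passage to the limit trivial and, crucially, requires no tightness of $(P_n)$. Both $\int h_{k_j}\,dP\to\int h^\infty\,dP$ and $|\int h_{k_j}\,dP_{n_{k_j}}-\int h^\infty\,dP_{n_{k_j}}|\le\|h_{k_j}-h^\infty\|_\infty\to 0$ follow immediately, and passing to the limit in the strict inequality yields $\limsup_n|\int h^\infty\,dP-\int h^\infty\,dP_n|\ge\beta-\epsilon>B$, contradicting the definition of $B$. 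The main obstacle throughout is exactly this non-uniformity in $k$: the test functions $h_k$ that witness the Prokhorov deficit depend on $n_k$, so a naive pointwise bound would swap $\limsup_n$ and $\sup_h$ in the wrong direction; the twin ingredients that break the symmetry are the tightness of $P$ (not of $(P_n)$), which confines the $h_k$ to a common compact support, and the $C^1$ Arzelà--Ascoli extraction, which replaces them by a single $h^\infty\in\mathcal{H}$.
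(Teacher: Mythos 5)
Your proposal is correct, but the harder inclusion is handled by a genuinely different mechanism than the paper's. For the direction $\sup_h\limsup_n|\int h\,dP-\int h\,dP_n|\le\sup_\lambda\limsup_n\rho_\lambda(P,P_n)$ both arguments rest on the layer--cake formula, though you make it quantitative via the Lipschitz inclusion $\{h>t\}^{(\lambda\alpha)}\subset\{h\ge t-L\lambda\alpha\}$, yielding the clean bound $|\int h\,dP-\int h\,dQ|\le(1+L\lambda)\rho_\lambda(P,Q)$, whereas the paper instead fixes a single $\lambda_0$ by monotone convergence of $\int_0^1 P[\{h\ge t\}^{(\lambda\gamma)}]\,dt$; both routes need the symmetry of $\rho_\lambda$ (or the closure of $\mathcal{H}$ under $h\mapsto 1-h$) to get the absolute value, which you should state explicitly. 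For the reverse direction the two proofs diverge: the paper runs a direct argument, choosing a partition $a_1<\cdots<a_N$ with mesh $<\lambda\gamma/2$ capturing all but $\epsilon$ of the $P$-mass and, for each of the $2^{N-1}$ subsets $K$ of intervals, one fixed test function $h_K$; an arbitrary Borel set $A$ (which may vary with $n$) is then squeezed between $\cup_{k\in K_0}[a_k,a_{k+1}]$ and $A^{(\lambda\gamma)}$, so that finitely many test functions control all Borel sets uniformly in $n$. You instead argue by contradiction, mollify the indicator of the offending set $A_k\cap[-N,N]$ into a test function $h_k$ with uniform $C^2$ bounds and common compact support, and extract a single limiting witness $h^\infty\in\mathcal{H}$ by a $C^1$ Arzel\`a--Ascoli argument. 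Both approaches isolate the same obstruction --- the witnessing sets depend on $n$ --- and both break it using only the tightness of the fixed measure $P$; the paper's resolution is more elementary and constructive (a finite family of explicit bump functions, no compactness extraction), while yours buys an explicit modulus in the easy direction and a conceptually transparent single limiting test function in the hard one. I find no gap in your argument.
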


\begin{proof}
First suppose that, for $\gamma > 0$,
\begin{equation}
\sup_{h \in \mathcal{H}} \limsup_{n \rightarrow \infty} \left|\int_{\mathbb{R}} h dP - \int_{\mathbb{R}} h dP_n\right| < \gamma.\label{eq:seven}
\end{equation}
Now fix $\epsilon > 0$ and $\lambda > 0$, and choose real numbers 
$$a_1 < a_2 <  \cdots < a_{N-1} < a_N$$ 
such that
\begin{equation}
\forall k \in \{1,\ldots,N-1\} : a_{k+1} - a_k < \lambda \gamma /2\label{eq:five}
\end{equation}
and
\begin{equation}
P\left[\mathbb{R} \setminus [a_1,a_N]\right] < \epsilon.\label{eq:one}
\end{equation}
For each subset $K \subset \{1,\ldots,N-1\}$, choose $h_K \in \mathcal{H}$ such that 
\begin{equation}
\forall x \in \cup_{k \in K} \left[a_k,a_{k+1}\right] : h_K(x) = 1\label{eq:two}
\end{equation}
and
\begin{equation}
\forall x \in \mathbb{R} \setminus \left(\cup_{k \in K} \left[a_k,a_{k+1}\right]\right)^{(\lambda \gamma /2)} : h_K(x) = 0.\label{eq:four}
\end{equation}
By (\ref{eq:seven}), for each $K \subset \{1,\ldots,N-1\}$, there exists $n_K$ such that for all $n \geq n_K$
\begin{equation}
\left|\int_\mathbb{R} h_K dP - \int_\mathbb{R} h_K dP_n\right| < \gamma.\label{eq:three}
\end{equation}
Let $n_0 = \max_{K \subset \{1,\ldots,N-1\}} n_K$, and take $n \geq n_0$ and a Borel set $A \subset \mathbb{R}$. Put
\begin{equation}
K_0 = \left\{ k \in \{1,\ldots,N-1\} : A \cap \left[a_k,a_{k+1}\right] \neq \emptyset\right\}.\label{eq:six}
\end{equation}
Then
$$P[A] \leq P\left[\cup_{k \in K_0} ]a_k,a_{k+1}[ \right] + P\left[\mathbb{R} \setminus [a_1,a_N]\right],$$
which, by (\ref{eq:one}) and (\ref{eq:two}),
$$\leq \int_\mathbb{R} h_{K_0} dP + \epsilon,$$
which, by (\ref{eq:three}),
$$< \int_\mathbb{R} h_{K_0} dP_n + \gamma + \epsilon,$$
which, by (\ref{eq:four}),
$$\leq P_n \left[\left(\cup_{k \in K_0} \left[a_k,a_{k+1}\right]\right)^{(\lambda \gamma /2)}\right] + \gamma + \epsilon,$$
which, by (\ref{eq:five}) and (\ref{eq:six}),
$$\leq P_n \left[A^{(\lambda \gamma)}\right] + \gamma + \epsilon.$$
By definition of the parametrized Prokhorov metric, we conclude that
$$\rho_\lambda(P,P_n) \leq \gamma + \epsilon.$$
Since $n$, $\lambda$, and $\epsilon$ were arbitrary, we infer that
$$\sup_{\lambda > 0} \limsup_{n} \rho_{\lambda} (P,P_n) \leq \gamma.$$
Finally, since $\gamma$ was arbitrarily taken such that (\ref{eq:seven}) holds, we conclude that the left-hand side of (\ref{eq:Ph}) is dominated by the right-hand side of (\ref{eq:Ph}).

For the reverse inequality, suppose that, for $\gamma > 0$,
\begin{equation}
\sup_{\lambda > 0} \limsup_{n \rightarrow \infty} \rho_\lambda(P,P_n) < \gamma.\label{eq:0}
\end{equation}
Now fix $h \in \mathcal{H}$ and $\epsilon > 0$. Notice that, by the monotone convergence theorem,
$\int_0^1 P\left[\{h \geq t\}^{(\lambda \gamma)}\right] dt \rightarrow \int_0^1 P[h \geq t] dt$  as  $\lambda \downarrow 0,$ whence we find $\lambda_0 > 0$ such that 
\begin{equation}
\int_0^1 P\left[\{h \geq t\}^{(\lambda_0 \gamma)}\right] dt \leq \int_0^1 P[h \geq t] dt + \epsilon.\label{eq:2}
\end{equation}
By (\ref{eq:0}), and using symmetry of $\rho_\lambda$, we find $n_0$ such that for each $n \geq n_0$ and each Borel set $A \subset \mathbb{R}$
\begin{equation}
P_n[A] \leq P\left[A^{(\lambda_0 \gamma)}\right] + \gamma.\label{eq:1}
\end{equation}
Fix $n \geq n_0$. By the layer cake representation,
$$\int h dP_n = \int_0^1 P_n[h \geq t] dt,$$
which, by (\ref{eq:1}),
$$\leq \int_0^1 P\left[\{h \geq t\}^{(\lambda_0 \gamma)}\right] dt + \gamma,$$
which, by (\ref{eq:2}),
$$\leq \int_0^1 P[h \geq t] dt + \gamma + \epsilon,$$
which, again by the layer cake representation,
$$= \int h dP + \gamma + \epsilon.$$
As before, by the arbitrariness of $n$, $h$, $\epsilon$, and $\gamma$, we conclude that
\begin{equation}
\limsup_{n \rightarrow \infty} \left(\int h d P_n - \int h dP\right) \leq \sup_{\lambda > 0} \limsup_{n \rightarrow \infty} \rho_\lambda(P,P_n).\label{eq:3}
\end{equation}
Finally, taking into account that $h \in \mathcal{H}$ if and only if $1 -h \in \mathcal{H},$ (\ref{eq:3}) learns that the right-hand side of (\ref{eq:Ph}) is dominated by the left-hand side of (\ref{eq:Ph}), which finishes the proof.
\end{proof}

\begin{thm}\label{thm:ACLTP}
Let $\xi$ be as in Section \ref{sec:Intro}. Then there exists a universal constant $C_P > 0$ such that 
\begin{equation*}
\lambda_{\rho_{\lambda}}\left(\sum_{k=1}^n \xi_{n,k} \rightarrow \xi\right) \leq C_P \textrm{\upshape{Lin}}\left(\{\xi_{n,k}\}\right)
\end{equation*}
for all $\lambda> 0$ and all STA's $\{\xi_{n,k}\}$ which satisfy Feller's condition (\ref{eq:FelCon}). Moreover, $C_P$ can be taken equal to $4$.
\end{thm}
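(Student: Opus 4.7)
The plan is to parallel the proof of Theorem \ref{thm:ACLTW}, but replace Lemma \ref{lem:SmoothingWasserstein} with Lemma \ref{lem:Ph}, which reduces the task to controlling $\sup_{h \in \mathcal{H}} \limsup_n \left|\mathbb{E}[h(\xi) - h(\sum_{k=1}^n \xi_{n,k})]\right|$. Note the crucial asymmetry: for $h \in \mathcal{K}_c(\mathbb{R})$ the natural bound is via $\|h'\|_\infty \leq 1$, whereas for $h \in \mathcal{H}$ the natural bound is via $\|h\|_\infty \leq 1$. I would therefore invoke the second inequality of Lemma \ref{lem:ACLTWbounds}: for any $h \in \mathcal{H}$, since $0 \leq h \leq 1$ we have $\|\mathbb{E}[h(\xi)] - h\|_\infty \leq 1$, so (\ref{eq:lemACLTP}) yields $\|f_h'\|_\infty \leq 2$, and hence
\begin{equation*}
\sup_{x_1,x_2 \in \mathbb{R}} \left|f_h'(x_1) - f_h'(x_2)\right| \leq 2 \|f_h'\|_\infty \leq 4.
\end{equation*}
This is the only uniform-in-$h$ bound that is needed.

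Next I would insert this bound into Theorem \ref{thm:BigIneq}. For each fixed $h \in \mathcal{H}$, each $n$, and each $\epsilon > 0$ one obtains
\begin{equation*}
\left|\mathbb{E}\left[h(\xi) - h\left(\sum_{k=1}^n \xi_{n,k}\right)\right]\right| \leq \tfrac{1}{2}\|f_h^{\prime\prime}\|_\infty \epsilon + 4 \sum_{k=1}^n \mathbb{E}\left[\xi_{n,k}^2; |\xi_{n,k}| \geq \epsilon\right] + C(h)\, \max_{k=1}^n \mathbb{E}[|\xi_{n,k}|],
\end{equation*}
where $C(h) = \sup_{x_1,x_2}|f_h^{\prime\prime}(x_1) - f_h^{\prime\prime}(x_2)| < \infty$ by Theorem \ref{thm:BigIneq}. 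The constants $\|f_h^{\prime\prime}\|_\infty$ and $C(h)$ need not admit a bound independent of $h$, and this is the key technical point: they are harmless because they multiply quantities that we will drive to zero \emph{after} fixing $h$.

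The order of limits is then: first take $\limsup_{n\to\infty}$ with $h$ and $\epsilon$ fixed. Feller's condition (\ref{eq:FelCon}) together with Cauchy--Schwarz, $\mathbb{E}[|\xi_{n,k}|] \leq \sqrt{\mathbb{E}[\xi_{n,k}^2]}$, kills the third term. Next, let $\epsilon \downarrow 0$; the first term vanishes and the middle term tends to $4\, \text{Lin}(\{\xi_{n,k}\})$ by monotonicity of $\sum_k \mathbb{E}[\xi_{n,k}^2;|\xi_{n,k}|\geq\epsilon]$ in $\epsilon$, together with the definition (\ref{eq:Lin}). Thus, for each $h \in \mathcal{H}$,
\begin{equation*}
\limsup_{n\to\infty}\left|\mathbb{E}\!\left[h(\xi) - h\!\left(\sum_{k=1}^n \xi_{n,k}\right)\right]\right| \leq 4\, \text{Lin}(\{\xi_{n,k}\}).
\end{equation*}
Taking $\sup_{h \in \mathcal{H}}$ and invoking Lemma \ref{lem:Ph} gives $\sup_{\lambda > 0} \limsup_n \rho_\lambda(\xi, \sum_k \xi_{n,k}) \leq 4\, \text{Lin}(\{\xi_{n,k}\})$, which in particular dominates $\lambda_{\rho_\lambda}(\sum_k \xi_{n,k} \to \xi)$ for every fixed $\lambda > 0$, yielding $C_P = 4$.

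The main obstacle, and the reason for Feller's hypothesis, is that neither $\|f_h^{\prime\prime}\|_\infty$ nor $C(h)$ is uniformly bounded on $\mathcal{H}$ — contrast the situation in Theorem \ref{thm:ACLTW}, where contractions give a uniform bound on $\|f_h^{\prime\prime}\|_\infty$ via (\ref{eq:lemACLTW2}). The remedy is to fix $h$ before passing to the limit in $n$ and $\epsilon$, so that these $h$-dependent constants are absorbed by $\epsilon$ (which we send to $0$) and by $\max_k \mathbb{E}[|\xi_{n,k}|]$ (which Feller's condition forces to $0$); only \emph{after} both limits are taken do we take $\sup_{h \in \mathcal{H}}$ via Lemma \ref{lem:Ph}.
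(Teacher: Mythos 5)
Your proposal is correct and follows essentially the same route as the paper: the uniform bound $\sup_{x_1,x_2}\left|f_h^\prime(x_1)-f_h^\prime(x_2)\right|\leq 2\|f_h^\prime\|_\infty\leq 4$ from (\ref{eq:lemACLTP}), inserted into Theorem \ref{thm:BigIneq}, with the $h$-dependent constants absorbed by $\epsilon\downarrow 0$ and by Feller's condition before taking the supremum over $h$ via Lemma \ref{lem:Ph}. The only cosmetic difference is that the paper additionally bounds $\tfrac{1}{2}\|f_h^{\prime\prime}\|_\infty$ and $\sup_{x_1,x_2}\left|f_h^{\prime\prime}(x_1)-f_h^{\prime\prime}(x_2)\right|$ by multiples of $\|h^\prime\|_\infty$ using (\ref{eq:lemACLTW2}), whereas you simply keep them as finite $h$-dependent constants; both choices work for exactly the reason you articulate.
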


\begin{proof}
Let $h : \mathbb{R} \rightarrow [0,1]$ be a continuously differentiable map with a bounded derivative. Then
\begin{equation}
\sup_{x_1,x_2 \in \mathbb{R}} \left|f_h^\prime(x_1) - f_h^\prime(x_2)\right| \leq 2 \|f_h^\prime\|_\infty \leq 4 \|\mathbb{E}[h(\xi)] - h\|_\infty \leq 4 ,\label{eq:ACLTP1}
\end{equation}
by (\ref{eq:lemACLTP}), and
\begin{equation}
\sup_{x_1,x_2 \in \mathbb{R}} \left|f_h^{\prime \prime}(x_1) - f_h^{\prime \prime} (x_2)\right| \leq 2 \|f_h^{\prime \prime}\|_\infty \leq 4 \|h^\prime\|_\infty\label{eq:ACLTP2},
\end{equation}
by (\ref{eq:lemACLTW2}). Furthermore, combining (\ref{eq:BigIneq}) with (\ref{eq:lemACLTW2}), (\ref{eq:ACLTP1}), and (\ref{eq:ACLTP2}), yields
\begin{eqnarray}
\lefteqn{\left|\mathbb{E}\left[h(\xi) - h\left(\sum_{k=1}^n \xi_{n,k}\right)\right]\right|}\label{eq:ACLTP3}\\
&\leq&  \|h^\prime\|_\infty \epsilon + 4 \sum_{k=1}^n \mathbb{E}\left[\xi_{n,k}^2 ; \left|\xi_{n,k}\right| \geq \epsilon\right] + 4 \|h^\prime\|_\infty \max_{k=1}^n \mathbb{E}\left[\left|\xi_{n,k}\right|\right]\nonumber
\end{eqnarray}
for all STA's $\{\xi_{n,k}\}$, all $n$, and all $\epsilon > 0$. Finally, assuming that $\{\xi_{n,k}\}$ satisfies Feller's condition, calculating the superior limits, and letting $\epsilon \downarrow 0$, we see that that (\ref{eq:Ph}) and (\ref{eq:ACLTP3}) lead to the desired result.
\end{proof}

Notice the remarkable fact that the constant $C_P$ in Theorem \ref{thm:ACLTP} does not depend on the parameter $\lambda$. This, in light of relation (\ref{LimProkhTV}),  suggests that an approximate central limit theorem in the spirit of Theorem \ref{thm:ACLTK} for the total variation distance $d_{TV}$ might be derived from Theorem \ref{thm:ACLTP}. However, the following example shows that this is not the case.

\begin{vb}
Let $\xi$ and $\{\xi_{n,k}\}$ be as in Section 1, and assume that $\{\xi_{n,k}\}$ consists of discrete random variables and satisfies Lindeberg's condition. Then 
\begin{equation*}
\textrm{\upshape{Lin}}(\{\xi_{n,k}\}) = 0
\end{equation*}
and, each $\sum_{k=1}^n \xi_{n,k}$ also being discrete, 
\begin{equation*}
d_{TV}\left(\xi,\sum_{k=1}^n \xi_{n,k}\right) = 1.
\end{equation*}
We conclude that there does not exist a constant $C > 0$ such that
\begin{equation*}
\lambda_{d_{TV}}\left(\sum_{k=1}^n \xi_{n,k} \rightarrow \xi\right) \leq C \textrm{\upshape{Lin}}(\{\xi_{n,k}\})
\end{equation*}
for all STA's $\{\xi_{n,k}\}$ satisfying Feller's condition.
\end{vb}

We summarize the information obtained in Theorem \ref{thm:ACLTK} and Remark \ref{rem:ACLTK}, Theorem \ref{thm:ACLTW}, and Theorem \ref{thm:ACLTP}, in the following result. We put
\begin{equation*}
\lambda_P\left(\sum_{k=1}^n \xi_{n,k} \rightarrow \xi\right) = \sup_{\lambda \in \mathbb{R}^+_0} \lambda_{\rho_\lambda}\left(\sum_{k=1}^n \xi_{n,k} \rightarrow \xi\right).
\end{equation*}

\begin{thm}\label{thm:ACLT}
Let $\xi$ be as in Section 1. Then, for each $\delta \in \{K,W,P\}$, there exists a universal constant $C_\delta > 0$ such that
\begin{equation*}
\lambda_\delta\left(\sum_{k=1}^n \xi_{n,k} \rightarrow \xi\right) \leq C_\delta \textrm{\upshape{Lin}}\left(\left\{\xi_{n,k}\right\}\right)
\end{equation*}
for all STA's $\{\xi_{n,k}\}$ satisfying Feller's condition (\ref{eq:FelCon}). Moreover, $C_K$ can be taken equal to $1$, $C_W$ equal to $8$, and $C_P$ equal to $4$.
\end{thm}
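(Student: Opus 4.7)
The plan is to assemble this result as a bookkeeping consolidation of the three component statements already established in the paper, so no new probabilistic work is required. For $\delta = K$, the bound with $C_K = 1$ under Feller's condition is precisely the content of Remark \ref{rem:ACLTK}, which records the sharpening of Theorem \ref{thm:ACLTK} from \cite{BLV13} obtained by combining Stein's method with an asymptotic smoothing technique. For $\delta = W$, the statement with $C_W = 8$ is literally what Theorem \ref{thm:ACLTW} asserts. So in these two cases a single-line appeal to the corresponding result suffices.

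For $\delta = P$, the only nontrivial (but still short) point is the passage from Theorem \ref{thm:ACLTP} to the $\lambda$-uniform statement. Theorem \ref{thm:ACLTP} produces, for \emph{each} fixed $\lambda > 0$, the inequality
\begin{equation*}
\lambda_{\rho_\lambda}\left(\sum_{k=1}^n \xi_{n,k} \rightarrow \xi\right) \leq 4\,\textrm{\upshape{Lin}}\left(\{\xi_{n,k}\}\right),
\end{equation*}
and the crucial feature, already emphasized in the discussion following that theorem, is that the constant $4$ is \emph{independent} of $\lambda$. Consequently one may take the supremum of the left-hand side over $\lambda \in \mathbb{R}^+_0$ without affecting the bound. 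Recalling the definition $\lambda_P(\sum_{k=1}^n \xi_{n,k} \rightarrow \xi) = \sup_{\lambda > 0} \lambda_{\rho_\lambda}(\sum_{k=1}^n \xi_{n,k} \rightarrow \xi)$ made just before the theorem, this yields $\lambda_P(\sum_{k=1}^n \xi_{n,k} \rightarrow \xi) \leq 4\,\textrm{\upshape{Lin}}(\{\xi_{n,k}\})$, so that $C_P = 4$ works.

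There is essentially no obstacle to overcome: the substantive content lives in Theorems \ref{thm:ACLTK}, \ref{thm:ACLTW}, \ref{thm:ACLTP} and Remark \ref{rem:ACLTK}, which in turn rely on the general inequality of Theorem \ref{thm:BigIneq} together with the smoothing lemmas (Lemma \ref{lem:SmoothingWasserstein} for $W$ and Lemma \ref{lem:Ph} for the parametrized Prokhorov distances). The only genuinely new observation needed here is the $\lambda$-uniformity of the Prokhorov constant, which, as noted, is immediate from inspection of the bound in Theorem \ref{thm:ACLTP}.
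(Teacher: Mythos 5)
Your proposal is correct and matches the paper's treatment exactly: Theorem \ref{thm:ACLT} is stated there as a summary of Theorem \ref{thm:ACLTK} with Remark \ref{rem:ACLTK} (giving $C_K=1$ under Feller's condition), Theorem \ref{thm:ACLTW} (giving $C_W=8$), and Theorem \ref{thm:ACLTP} (giving $C_P=4$), with the passage to $\lambda_P$ justified precisely by the $\lambda$-independence of the constant $4$, as you observe.
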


\section*{Appendix A: Proof of Theorem \ref{thm:BigIneq}}\label{appendix:ProofIneq}

We follow \cite{BLV13}, Section 2. We keep a continuously differentiable $h : \mathbb{R} \rightarrow [0,1]$, with bounded derivative, fixed, and let $f_h$ be its Stein transform defined by (\ref{eq:SteinTransform}). Also, we put $$\sigma_{n,k}^2 = \mathbb{E}[\xi_{n,k}^2].$$

The following lemma is easily verified. It can be found in e.g. \cite{BC05} (p.10-11).

\begin{lemA}\label{SteinBasics}
$f_h$ is twice continuously differentiable, has bounded first and second derivatives, and
\begin{eqnarray}
\mathbb{E}\left[h(\xi)\right] - h(x)= x f_h(x) - f_h^\prime (x).\label{SteinIdentity}
\end{eqnarray}
\end{lemA}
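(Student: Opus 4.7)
The Stein identity and the $C^1$ regularity of $f_h$ are both immediate from the definition. I would start by rewriting $e^{-x^2/2} f_h(x) = \int_{-\infty}^x (h(t) - \mathbb{E}[h(\xi)]) e^{-t^2/2} \, dt$. The integrand is continuous and dominated by a Gaussian weight, so this integral defines a continuously differentiable function of $x$ whose derivative is $(h(x) - \mathbb{E}[h(\xi)]) e^{-x^2/2}$. Consequently $f_h$ itself is $C^1$. Applying the product rule to $f_h(x) = e^{x^2/2} \cdot [e^{-x^2/2} f_h(x)]$ yields $f_h'(x) = x f_h(x) + h(x) - \mathbb{E}[h(\xi)]$, which is exactly the Stein identity $\mathbb{E}[h(\xi)] - h(x) = x f_h(x) - f_h'(x)$.

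For the $C^2$ regularity, I would then observe that since $h$ is assumed $C^1$, the right-hand side of the Stein identity is itself continuously differentiable in $x$. Differentiating once more gives
\[
f_h''(x) = f_h(x) + x f_h'(x) + h'(x),
\]
which is continuous as a sum of continuous functions. Thus $f_h$ is twice continuously differentiable.

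The main remaining task, and the only genuine obstacle, is to establish the uniform boundedness of $f_h'$ and $f_h''$. The difficulty is that the identities $f_h'(x) = x f_h(x) + h(x) - \mathbb{E}[h(\xi)]$ and $f_h''(x) = (1+x^2) f_h(x) + x(h(x)-\mathbb{E}[h(\xi)]) + h'(x)$ (obtained by substituting the Stein identity into itself) both involve the unbounded factor $x$ multiplying $f_h$, so one cannot simply invoke a crude sup-bound on $f_h$. To extract the cancellation, I would use the complementary representation
\[
f_h(x) = -e^{x^2/2} \int_x^\infty (h(t) - \mathbb{E}[h(\xi)]) e^{-t^2/2} \, dt,
\]
valid because the centering of $h - \mathbb{E}[h(\xi)]$ against the Gaussian density makes the full-line integral vanish. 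Using this representation on $[0,\infty)$ and the original one on $(-\infty,0]$, inserting the identity $h(t) - \mathbb{E}[h(\xi)] = \int_{\mathbb{R}} (h(t) - h(s)) \phi(s) \, ds$ with $\phi$ the standard Gaussian density, and exploiting the Lipschitz estimate $|h(t) - h(s)| \le \|h'\|_\infty |t-s|$, I would bound the quantities $|f_h(x)|$, $|x f_h(x)|$, and $|x^2 f_h(x)|$ uniformly in $x$ by a constant multiple of $\|h'\|_\infty$, via standard Mills-ratio inequalities for Gaussian tails.

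Once these Gaussian-tail estimates are in place, the boundedness of $f_h'$ follows immediately from $f_h'(x) = x f_h(x) + h(x) - \mathbb{E}[h(\xi)]$ (using that $h$ is bounded, so the last two terms are bounded), and the boundedness of $f_h''$ follows from the expression $f_h''(x) = (1+x^2) f_h(x) + x(h(x)-\mathbb{E}[h(\xi)]) + h'(x)$ by the same bookkeeping. The hard part is purely technical: the careful verification that the Gaussian tail integrals, combined with the Lipschitz bound on $h$, precisely absorb each factor of $x$ appearing in the derivative formulas.
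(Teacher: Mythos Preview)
The paper does not actually prove this lemma; it merely states that the result is ``easily verified'' and refers to \cite{BC05}, p.~10--11. So there is no in-paper argument to compare against, and your task is really to give a self-contained proof.

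Your derivation of the Stein identity and of the $C^2$ regularity is correct and is exactly how one proceeds. The gap is in your boundedness argument for $f_h''$. You propose to bound $|f_h(x)|$, $|x f_h(x)|$, and $|x^2 f_h(x)|$ each separately by a constant multiple of $\|h'\|_\infty$ via Mills-ratio estimates, and then read off boundedness of $f_h'' = (1+x^2) f_h + x\bar h + h'$ (with $\bar h = h - \mathbb{E}[h(\xi)]$) term by term. This cannot work. Already for $h(x)=x$ one has $f_h\equiv -1$, so $x f_h(x)=-x$ is unbounded even though $\|h'\|_\infty=1$. Even restricting to bounded $h$ (as the appendix does, taking $h:\mathbb{R}\to[0,1]$), the quantity $x^2 f_h(x)$ is in general unbounded: for $h(x)=\tfrac{1}{\pi}\arctan x + \tfrac{1}{2}$ one has $\bar h(t)\to \tfrac{1}{2}$ as $t\to\infty$, so by the Mills ratio $f_h(x)\sim -\tfrac{1}{2x}$ and hence $x^2 f_h(x)\sim -\tfrac{x}{2}$.

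What actually makes $f_h''$ bounded is the cancellation between $(1+x^2) f_h(x)$ and $x\bar h(x)$, not separate control of each. The standard route (in \cite{BC05} or \cite{CGS11}) is to integrate by parts in the complementary representation, writing for $x>0$
\[
f_h(x) = -\frac{\bar h(x)}{x} - e^{x^2/2}\int_x^\infty\Bigl(\frac{h'(t)}{t}-\frac{\bar h(t)}{t^2}\Bigr)e^{-t^2/2}\,dt,
\]
so that $f_h'(x)=x f_h(x)+\bar h(x)$ is expressed directly as an integral that one \emph{can} bound by Mills-ratio estimates; a second integration by parts handles $f_h''$. Your outline has the right ingredients (complementary representation, Lipschitz bound, Gaussian tails), but the bookkeeping you describe---bounding $x^k f_h$ in isolation and then summing---misses this cancellation and would not close.
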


The following lemma can be found in \cite{BLV13} (Lemma 2.4). We give the proof for completeness.

\begin{lemA}\label{BHDELem}
Put
\begin{eqnarray*}
\delta_{n,k} = f_h\left(\sum_{i \neq k} \xi_{n,i} + \xi_{n,k}\right) - f_{h}\left(\sum_{i \neq k} \xi_{n,i}\right) - \xi_{n,k} f^\prime_h\left(\sum_{i \neq k} \xi_{n,i}\right)\label{defdelta}
\end{eqnarray*}
and 
\begin{eqnarray*}
\epsilon_{n,k} = f^\prime_h\left(\sum_{i \neq k} \xi_{n,i} + \xi_{n,k}\right) - f^\prime_{h}\left(\sum_{i \neq k} \xi_{n,i}\right) - \xi_{n,k} f^{\prime \prime}_h\left(\sum_{i \neq k}\xi_{n,i}\right).\label{defepsilon}
\end{eqnarray*}
Then
\begin{eqnarray}
\lefteqn{\mathbb{E}\left[\left(\sum_{k=1}^n \xi_{n,k}\right) f_h\left(\sum_{k=1}^n \xi_{n,k}\right) - f_h^\prime\left(\sum_{k=1}^n \xi_{n,k}\right)\right]}\nonumber\\
 &=& \sum_{k=1}^n \mathbb{E}\left[\xi_{n,k}\delta_{n,k}\right] - \sum_{k=1}^n \sigma_{n,k}^2 \mathbb{E}\left[\epsilon_{n,k}\right].\label{BHallIneq}
\end{eqnarray}
\end{lemA}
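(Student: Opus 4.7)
The plan is to decompose both terms on the left-hand side by conditioning on the leave-one-out partial sum $S_n^{(k)} := \sum_{i \neq k} \xi_{n,i}$, exploiting the fact that $S_n = S_n^{(k)} + \xi_{n,k}$ and that $\xi_{n,k}$ is independent of $S_n^{(k)}$. The definitions of $\delta_{n,k}$ and $\epsilon_{n,k}$ are tailored Taylor-type remainders that let us rewrite $f_h(S_n)$ and $f_h'(S_n)$ as an affine-in-$\xi_{n,k}$ piece plus a remainder.

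For the first term, I would write
\begin{equation*}
\xi_{n,k} f_h(S_n) = \xi_{n,k} f_h(S_n^{(k)}) + \xi_{n,k}^2 f_h^\prime(S_n^{(k)}) + \xi_{n,k} \delta_{n,k},
\end{equation*}
sum over $k$ and take expectations. Independence together with $\mathbb{E}[\xi_{n,k}] = 0$ kills the first piece, while $\mathbb{E}[\xi_{n,k}^2 f_h^\prime(S_n^{(k)})] = \sigma_{n,k}^2 \mathbb{E}[f_h^\prime(S_n^{(k)})]$. This yields
\begin{equation*}
\mathbb{E}\left[S_n f_h(S_n)\right] = \sum_{k=1}^n \sigma_{n,k}^2 \mathbb{E}\left[f_h^\prime(S_n^{(k)})\right] + \sum_{k=1}^n \mathbb{E}\left[\xi_{n,k} \delta_{n,k}\right].
\end{equation*}

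For the second term, I would similarly rewrite, for each $k$,
\begin{equation*}
f_h^\prime(S_n) = f_h^\prime(S_n^{(k)}) + \xi_{n,k} f_h^{\prime\prime}(S_n^{(k)}) + \epsilon_{n,k},
\end{equation*}
take expectations (again the middle term vanishes by independence and zero mean), multiply by $\sigma_{n,k}^2$, and sum. Here the crucial algebraic fact is the STA normalization $\sum_{k=1}^n \sigma_{n,k}^2 = 1$, which collapses the left side to $\mathbb{E}[f_h^\prime(S_n)]$:
\begin{equation*}
\mathbb{E}\left[f_h^\prime(S_n)\right] = \sum_{k=1}^n \sigma_{n,k}^2 \mathbb{E}\left[f_h^\prime(S_n^{(k)})\right] + \sum_{k=1}^n \sigma_{n,k}^2 \mathbb{E}\left[\epsilon_{n,k}\right].
\end{equation*}

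Subtracting the two displays cancels the common sum $\sum_k \sigma_{n,k}^2 \mathbb{E}[f_h^\prime(S_n^{(k)})]$, which delivers exactly the identity (\ref{BHallIneq}). There is no substantive obstacle: everything is a bookkeeping exercise once the Taylor-type definitions of $\delta_{n,k}, \epsilon_{n,k}$ are unpacked. The one place to be careful is the use of independence, since $\xi_{n,k}$ is independent of $S_n^{(k)}$ (and hence of any measurable function of it) but not of $S_n$; the boundedness of $f_h^\prime$ and $f_h^{\prime\prime}$ granted by Lemma~\ref{SteinBasics} ensures all expectations above are finite and the exchange of sum and expectation is justified.
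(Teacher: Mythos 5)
Your proposal is correct and is essentially the paper's own proof run in the opposite direction: the paper substitutes the definitions of $\delta_{n,k}$ and $\epsilon_{n,k}$ into the right-hand side and simplifies, while you decompose the left-hand side, but both rest on exactly the same three ingredients (independence of $\xi_{n,k}$ from $\sum_{i\neq k}\xi_{n,i}$, the zero-mean condition, and $\sum_{k=1}^n\sigma_{n,k}^2=1$). No substantive difference.
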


\begin{proof}
Recalling that $\xi_{n,k}$ and $\sum_{i \neq k} \xi_{n,i}$ are independent, $\mathbb{E}\left[\xi_{n,k}\right] = 0$, and $\sum_{k=1}^n \sigma_{n,k}^2 = 1$, we get
\begin{eqnarray*}
\lefteqn{\sum_{k=1}^n \mathbb{E}\left[\xi_{n,k} \delta_{n,k}\right] - \sum_{k=1}^n \sigma_{n,k}^2 \mathbb{E}\left[\epsilon_{n,k}\right]}\\
&=& \sum_{k=1}^n \mathbb{E}\left[\xi_{n,k} f_{h}\left(\sum_{k = 1}^n \xi_{n,k}\right)\right] - \mathbb{E}\left[\xi_{n,k} f_{h}\left(\sum_{i \neq k} \xi_{n,i}\right)\right]\\
&& - \sum_{k=1}^n \mathbb{E}\left[\xi_{n,k}^2 f^\prime_h\left(\sum_{i \neq k} \xi_{n,i}\right)\right] - \sum_{k=1}^n \sigma_{n,k}^2 \mathbb{E}\left[f_{h}^{\prime}\left(\sum_{k=1}^n \xi_{n,k}\right)\right]\\
&& + \sum_{k=1}^n \mathbb{E}\left[\xi_{n,k}^2\right] \mathbb{E}\left[f^\prime_{h}\left(\sum_{i \neq k} \xi_{n,i}\right)\right] + \sum_{k=1}^n \sigma_{n,k}^2 \mathbb{E}\left[\xi_{n,k} f^{\prime \prime}_h\left(\sum_{i \neq k}\xi_{n,i}\right)\right] . 
\end{eqnarray*}
The last expression further reduces to
\begin{eqnarray*}
\lefteqn{\mathbb{E}\left[\left(\sum_{k=1}^n\xi_{n,k}\right) f_{h}\left(\sum_{k = 1}^n \xi_{n,k}\right)\right] - \mathbb{E}\left[\xi_{n,k}\right] \mathbb{E}\left[f_{h}\left(\sum_{i \neq k} \xi_{n,i}\right)\right]}\\
&& - \sum_{k=1}^n \mathbb{E}\left[\xi_{n,k}^2 f^\prime_h\left(\sum_{i \neq k} \xi_{n,i}\right)\right] -  \mathbb{E}\left[f_{h}^{\prime}\left(\sum_{k=1}^n \xi_{n,k}\right)\right]\\
&& + \sum_{k=1}^n \mathbb{E}\left[\xi_{n,k}^2 f^\prime_{h}\left(\sum_{i \neq k} \xi_{n,i}\right)\right] + \sum_{k=1}^n \sigma_{n,k}^2 \mathbb{E}\left[\xi_{n,k}\right] \mathbb{E}\left[f^{\prime \prime}_h\left(\sum_{i \neq k}\xi_{n,i}\right)\right] , 
\end{eqnarray*}
which is easily seen to equal
\begin{displaymath}
\mathbb{E}\left[\left(\sum_{k=1}^n \xi_{n,k}\right) f_h\left(\sum_{k=1}^n \xi_{n,k}\right) - f_h^\prime\left(\sum_{k=1}^n \xi_{n,k}\right)\right].
\end{displaymath}
This finishes the proof.
\end{proof}

The following lemma is an application of Taylor's theorem.

\begin{lemA}
For any $a, x \in \mathbb{R}$,
\begin{eqnarray}
\lefteqn{\left|f_h(a + x) - f_h(a) - f_h^\prime(a) x \right|}\nonumber\\ 
&&\leq \min \left\{\left(\sup_{x_1,x_2 \in \mathbb{R}}\left|f_h^\prime(x_1) - f^{\prime}(x_2)\right|\right) \left|x\right|,\frac{1}{2} \left\|f_h^{\prime \prime}\right\|_\infty x^2\right\}.\label{Taylorf}
\end{eqnarray}
\end{lemA}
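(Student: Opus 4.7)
The statement is the standard second order Taylor remainder estimate for $f_h$, packaged with two alternative bounds. The plan is to apply the fundamental theorem of calculus to write the remainder $R(x) := f_h(a+x) - f_h(a) - f_h'(a) x$ in two different integral forms and bound each directly. Lemma~A.1 already guarantees that $f_h$ is twice continuously differentiable with bounded first and second derivatives, so both integral representations are legitimate and the suprema appearing on the right-hand side are finite.

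For the bound $\tfrac{1}{2}\|f_h''\|_\infty x^2$, I would use the second order integral remainder
\[
R(x) = \int_0^x (x-t)\, f_h''(a+t)\, dt,
\]
valid because $f_h''$ is continuous. Taking absolute values and pulling out $\|f_h''\|_\infty$ yields
\[
|R(x)| \leq \|f_h''\|_\infty \left|\int_0^x (x-t)\, dt\right| = \tfrac{1}{2}\|f_h''\|_\infty\, x^2,
\]
with the sign of $x$ absorbed by the absolute value on both sides.

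For the other bound, I would use the first order representation
\[
f_h(a+x) - f_h(a) = \int_0^x f_h'(a+t)\, dt
\]
and subtract $f_h'(a)\, x = \int_0^x f_h'(a)\, dt$ to get
\[
R(x) = \int_0^x \bigl(f_h'(a+t) - f_h'(a)\bigr)\, dt.
\]
The integrand is pointwise dominated by $\sup_{x_1,x_2 \in \mathbb{R}}|f_h'(x_1) - f_h'(x_2)|$, and integrating this constant bound over an interval of length $|x|$ gives $(\sup_{x_1,x_2}|f_h'(x_1)-f_h'(x_2)|)\,|x|$. The claim then follows by taking the minimum of the two estimates.

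There is no substantive obstacle: the statement is a direct consequence of Taylor's formula combined with the regularity of $f_h$ recorded in Lemma~A.1. The only minor points to watch are the sign of $x$, handled by the absolute values, and making sure each integral bound is written so as to be valid for both positive and negative $x$.
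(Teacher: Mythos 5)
Your argument is correct and is precisely the route the paper intends: the paper offers no written proof, merely noting that the lemma ``is an application of Taylor's theorem,'' and your two integral-remainder representations supply exactly the missing details. Both bounds are handled properly, including the sign of $x$, so there is nothing to add.
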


We are now in a position to present a proof of Theorem \ref{thm:BigIneq}.\\

{\em Proof of Theorem \ref{thm:BigIneq}.}
For $n$ and $\epsilon > 0$, we have, by (\ref{SteinIdentity}), (\ref{BHallIneq}), and (\ref{Taylorf}),
\begin{eqnarray*}
\lefteqn{\left|\mathbb{E}\left[h\left(\xi\right) - h\left(\sum_{k=1}^{n}\xi_{n,k}\right) \right]\right|}\\
 &=& \left|\mathbb{E}\left[\left(\sum_{k=1}^n \xi_{n,k}\right) f_h\left(\sum_{k=1}^n \xi_{n,k}\right) - f_h^\prime\left(\sum_{k=1}^n \xi_{n,k}\right)\right]\right|\\
&\leq&  \sum_{k=1}^n \mathbb{E}\left[\left|\xi_{n,k}\delta_{n,k}\right|\right] + \sum_{k=1}^n \sigma_{n,k}^2 \mathbb{E}\left[\left|\epsilon_{n,k}\right|\right]\\
&\leq& \frac{1}{2}\left\|f_h^{\prime \prime}\right\|_\infty \sum_{k=1}^n \mathbb{E}\left[\left|\xi_{n,k}\right|^3 ; \left|\xi_{n,k}\right|<\epsilon\right]\\
&& + \left(\sup_{x_1,x_2 \in \mathbb{R}} \left|f_h^\prime(x_1) - f_{h}^\prime(x_2)\right|\right) \sum_{k=1}^n \mathbb{E}\left[\left|\xi_{n,k}\right|^2;\left|\xi_{n,k}\right|\geq \epsilon\right]\\
&& + \left(\sup_{x_1, x_2 \in \mathbb{R}} \left|f_h^{\prime \prime}(x_1) - f_h^{\prime \prime} (x_2)\right| \right)\sum_{k=1}^n\sigma_{n,k}^2 \mathbb{E}\left[\left|\xi_{n,k}\right|\right],
\end{eqnarray*}
which proves the desired result since $\sum_{k=1}^n \sigma_{n,k}^2 = 1$.

\end{document}